\newtheorem{theorem}{Theorem}
\theoremstyle{plain}
\newtheorem{definition}{Definition}
\newtheorem{example}{Example}
\newtheorem{lemma}{Lemma}
\newtheorem{proposition}{Proposition}
\newtheorem{remark}{Remark}
\numberwithin{equation}{section}
\newcommand{\abs}[1]{\lvert#1\rvert}
\begin{document}

\title[Ulam's method    for piecewise convex maps ]{ Ulam's method for computing stationary densities  of invariant measures   for piecewise 
convex maps with  countably infinite number of branches}

\author{Md Shafiqul Islam }

\address[Md Shafiqul Islam]{School of Mathematical and Computational Sciences\\ 
University of Prince Edward Island\\ 550 University Ave, Charlottetown, PE, C1A 4P3, Canada}
\email[Md Shafiqul Islam]{sislam@upei.ca}

\author[P. G\'ora]{Pawe\l\ G\'ora }
\address[P. G\'ora]{Department of Mathematics and Statistics, Concordia
University, 1455 de Maisonneuve Blvd. West, Montreal, Quebec H3G 1M8, Canada}
\email[P. G\'ora]{pawel.gora@concordia.ca}

\author{A H M Mahbubur Rahman}
\address[A H M Mahbubur Rahman]{Department of mathematics  and Statistics, \\Concordia University, 1455 De Maisonneuve Blvd. W.\\
Montreal, Quebec, H3G 1M8, Canada}
\email[A H M Mahbubur Rahman]{mahbubur.rahman@concordia.ca}

\keywords{Stationary densities; Absolutely continuous invariant measures; Ulam's method; Convex maps with  countably infinite number of branches; Frobenius-Perron (F-P) Operator}

\begin{abstract}
Let  $\tau: I=[0, 1]\to [0, 1]$  be a piecewise convex map with countably infinite number of branches. In \cite{GIR},   the existence of absolutely continuous invariant measure (ACIM) $\mu$  for $\tau$ and the exactness of the system $(\tau, \mu)$  has been proven. In this paper, we develop an Ulam method for approximation  of $f^*$, the density of ACIM $\mu$.
 We   construct a sequence $\{\tau_n\}_{n=1}^\infty$ of maps  $\tau_n: I\to I$  s. t. $\tau_n$ has a finite number of branches  and the sequence  $\tau_n$  converges to $\tau$  almost uniformly.  Using supremum norms and Lasota-Yorke type inequalities,  we  prove  the existence  of ACIMs $\mu_n$ for $\tau_n$ with the   densities $f_n$.  For a fixed $n$, we apply Ulam's method with
$k$ subintervals to $\tau_n$  and compute  approximations $f_{n,k}$ of $f_n$. We prove that  $f_{n,k}\to f^*$ as $n\to \infty, k\to \infty,$
both a.e. and in $L^1$.\\
We provide examples of piecewise convex maps $\tau$ with countably infinite number of branches, their approximations $\tau_n$ with
finite number of branches  and for increasing  values of parameter $k$ show the errors $\|f^*-f_{n,k}\|_1$.
\end{abstract}

\maketitle

\section{Introduction} \label{Sec1}

 The existence and properties of  absolutely continuous invariant measures (ACIMs)   of  deterministic dynamical systems  reflect their long time behaviour and play an important role in understanding their chaotic nature \cite{BG1,LM,  LY}. Let  $\mathcal{B}$ be  the Lebesgue $\sigma$-algebra of subsets of $I=[0,1]$ and let $\lambda$ be the normalized Lebesgue measure on $I$. Let  $\tau : I \rightarrow I$ be a non-singular measurable transformation. A measure $\mu$ on $\mathcal{B}$ is $\tau$-invariant   if $\mu(\tau^{-1}(A))= \mu(A)$ for
all $A\in \mathcal{B}.$  The Frobenius-Perron (F-P) operator  $P_\tau: L^1(I, \mathcal{B}, \lambda)\to L^1(I, \mathcal{B}, \lambda) $ of  $\tau$   plays an important role for the existence, approximations and properties of ACIMs. The F-P operator  $P_\tau$ is defined by \begin{equation}\label{FP-Operator}
 \int_A P_{\tau} f \hspace{1.5 mm}d \lambda = \int_{\tau^{-1}(A)} f \hspace{1.5 mm} d\lambda,\hspace{1 mm}
\forall A \in \mathcal{B}, \forall f \in L^{1}.
\end{equation}
It can be shown that for $\tau$ with countable number of monotonic branches, the F-P operator $P_\tau$ has the following representation \cite{BG1} :
\begin{equation}P_{\tau}f(x)=\sum _{i\ge 1}\frac{f(\tau_{i}^{-1}(x))}
{|{\tau^\prime(\tau_{i}^{-1}(x))}|} \chi_{I_i}(x)=\sum_{z\in\{\tau^{-1}(x)\}}\frac{f(z)}{\abs{\tau^{\prime}(z)}},
\end{equation}
where $\tau_{i}^{-1}$ are inverse branches of $\tau_i=\tau_{| I_i}$ , $i\ge 1$, and $\{I_i\}_{i\ge 1} $ are the subintervals of the monotonicity partition.
The F-P operator  $P_\tau$ has a number of useful properties including: (i)  {Linearity:}   $P_\tau:L^1 \rightarrow L^1 $ is a linear operator; (ii) {Positivity:} If $f\in L^1$ and $f\ge 0$, then $P_\tau f\ge 0$; (iii) { Contractivity:}   $P_\tau:L^1\rightarrow L^1 $ is a contraction, i.e., $\parallel{P_\tau f}\parallel_1\le \parallel{ f}\parallel_1$ for any $f\in L^1$. Moreover,                        $P_\tau:L^1\rightarrow L^1 $  is continuous with respect to norm topology;
(iv)  $P_\tau$ preserves integrals, i.e., $\int_0^1P_\tau f d\lambda=\int_0^1 f d\lambda$;
(v)  If $\tau_1,\tau_2:I\rightarrow I$ are non-singular transformations, then                       $P_{\tau_1\circ\tau_2}f=P_{\tau_1}\circ P_{\tau_2}f$. In particular, $P_{\tau^n}f=P^n_{\tau}f$ ;
(vi) Operator $P_\tau$ is conjugated to the Koopman operator $Kg=g\circ \tau$ on $L^\infty$, i.e., $$\int_I g\cdot P_\tau f\, d\lambda= \int_I g\circ\tau \cdot f\, d\lambda\ , f\in L^1\ , \ g\in L^\infty ;$$
(vii)      $P_\tau$ has a fixed point $f^*$ if and only if  the measure
$\mu=f^*\cdot\lambda$ defined by $\mu(A)=\int_{A}f^*d\lambda$ is
$\tau-$invariant. From the   property (vii) above  one can see  that the fixed point  of 
 $P_{\tau }$ is  the  density $f^*$ of  the ACIM  $\mu$ of $\tau$ (see 
 \cite{BG1}).

  In \cite{LY}, Lasota and Yorke proved the existence of ACIMs for piecewise expanding maps. In this paper, we consider transformations that are not necessarily expanding, i.e., their derivatives may be smaller than $1$, 
 but they possess another property which makes them very special, namely piecewise convexity.
 In \cite {LY1},  the authors  studied the existence of  ACIMs and the exactness for piecewise convex transformations with a finite number of branches and with a strong repeller. The authors in \cite{LY1} considered the following properties as  key properties for the proof of  the existence of  ACIMs  (i)  the   F-P operator $P_\tau$ maps non-increasing functions to non-increasing functions; (ii) If $f: [0, 1]\to \mathbb{R}^+$  is a non-increasing function, then 
$$\parallel P_\tau f\parallel_\infty \le A\parallel f\parallel_\infty+B\parallel f \parallel_1,$$
 where $A<1$ and $B>0$ are some constants.  In \cite{HR}, the author studied   ACIMs for piecewise convex maps with infinite number of branches, where $1$ is the limit point of partition points.  
The literature on  piecewise convex maps with a  finite number of branches is quite rich, see for example \cite{BDSS}, \cite{D},
\cite{In},  \cite{In2}, \cite{JM}, \cite{JM1}.

 In \cite{GIR},  we have studied ACIMs of maps in   two classes  $\mathcal{T}_{pc}^\infty(I), \mathcal{T}_{pc}^{\infty, 0}(I) $ of piecewise convex maps   $\tau: I \to I$ with countably infinite number of branches.  Any $\tau$ in the first class $\mathcal{T}_{pc}^\infty(I)$ is a   piecewise convex map   $\tau: I \to I$ with a countable number of branches and   arbitrary countable number of limit points of partition points  separated from $0$.  Let $\{ 0=a_0, a_1, a_2, \dots, a_n, \dots \}$ be a countable partition of $I$ such that $a_0<a_1$ and all $a_2, a_3, \dots \in [a_1, 1].$  We do not assume the sequence 
 $\{ a_2, \dots, a_n, \dots \}$ to be increasing or decreasing. For any $i\in \{0, 1, 2, \dots\},$ let $n(i)$ be the index such that the interval  $[a_i, a_{n(i)}]$ does not contain any other points of the partition. If $a_k$ is the limit point of decreasing subsequence of $a_{n}$'s, the $n(k)$ is not defined. For such points 
the interval $[a_k,a_k]$ is not considered an  element of the partition.  We allow any countable number of the limit points of the points of the partition. We assume that the union of intervals $[a_i, a_{n(i)}]$ covers the whole interval $I$ except of a countable number of points.
At any such point $x$ we assume $\tau(x)=0$ and $\tau'(x)=+\infty$. These points do not contribute to the images of the $P_\tau$
operator.

 Any $\tau$ in  the second  class $ \mathcal{T}_{pc}^{\infty, 0}(I)$  is  a   piecewise convex map   $\tau: I \to I$ with  countably infinite number of branches  such that there exists a  countable partition  $\{ 0=a_0< \dots < a_{0, -n}< a_{0, -(n-1)}<\dots <a_{0, -2}< a_{0, -1}=a_1, a_2, a_3, \dots, a_n, \dots\}$    of $I$ with  $\lim_{n\to\infty}a_{0, -n}=0.$  It is proved in \cite{GIR}  that any $\tau\in \mathcal{T}_{pc}^\infty(I)\cup \mathcal{T}_{pc}^{\infty, 0}(I) $ has the unique  stationary density $f^*$  of the ACIM $\mu^*.$   In this paper, we apply  Ulam's method for approximation  of $f^*.$  Fixed points of the F-P operator  $P_\tau$ of $\tau\in \mathcal{T}_{pc}^\infty(I)\cup \mathcal{T}_{pc}^{\infty, 0}(I) $ are the stationary densities of $\tau.$  The F-P operator  $P_\tau$  is an infinite dimensional operator. Except some simple cases (where $\tau$ is piecewise linear Markov and the Frobenius-Perron operator has a matrix representation), it is not easy to obtain an analytical solution of the  F-P equation $P_\tau f=f.$  
  
 \bigskip A method for  numerical computations  of invariant measures of dynamical systems was
suggested by Ulam  \cite{U}. For   piecewise expanding deterministic  transformations \cite{LY}, T-Y Li \cite{TYL} first
proved the convergence of Ulam's approximation. Since then, Ulam's method has been applied to one and higher dimensional expanding deterministic transformations (see,  for example, \cite{BL, BM, BS, DZ}.  Ulam's method is one of the most used and the best understood methods among numerical methods for the approximation of ACIMs for  deterministic maps \cite{TYL, U}.  However, approximate invariant densities via Ulam's methods are piecewise constant on subintervals of the partition of the state space.  In \cite{M}, Miller proved the convergence of Ulam's method for piecewise convex transformations with a finite number of branches and a strong repeller.  J. Ding  \cite{D} developed and presented piecewise linear and piecewise quadratic Markov finite approximation methods for such piecewise convex transformations. 

 In Section \ref{Sec2}, we introduce notations and review the existence of  stationary densities   of ACIMs for  $\tau\in \mathcal{T}_{pc}^\infty(I)\cup \mathcal{T}_{pc}^{\infty, 0}(I).$ In Section \ref{Sec3}, we   construct a sequence $\{\tau_n\}_{n=1}^\infty$ of maps  $\tau_n: I \to I$  s. t. $\tau_n$ has  a finite number of branches  and the sequence $\tau_n$  converges to $\tau$  almost uniformly.  Using supremum norms and Lasota-Yorke type inequalities,  we  prove  the existence of  densities $f_n$ of ACIMs $\mu_n$ for $\tau_n$. Moreover, we  prove that $f_n$ converges to $f^*.$  In Section \ref{Sec5} we apply Ulam's method with $k$ subintervals to $\tau_n$, compute an approximation $f_{n,k}$ of $f_n$ and prove that  $f_{n,k}\to f_n$ as $ k\to \infty.$ 
In Theorem \ref{Thm5} we prove that  $f_{n,k}\to f^*$, as $ n,k\to \infty$,  both a.e. and   in $L^1$.
Theoretically, we could apply Ulam's method directly to the map $\tau$. Practically, such method would be impossible since the calculation of the Ulam's matrix $M_\tau^{(k)}$ would require solving of infinitely many equations.
In Section \ref{Sec6}, we present numerical examples.

   \section { ACIMs for  piecewise convex maps  in classes $\mathcal{T}_{pc}^\infty(I)$ and $\mathcal{T}_{pc}^{\infty, 0}(I)$}\label{Sec2}
In this section, we review results on the existence of stationary densities of ACIMs  of piecewise convex maps with countably infinite number of branches. We closely follow \cite{GIR}.

  \subsection { ACIMs for  piecewise convex maps in   $\mathcal{T}_{pc}^\infty(I) $ }

For the maps in $\mathcal{T}_{pc}^\infty(I) $  the limit points of the partition points  are separated from $0$.

We say that $\tau\in \mathcal{T}_{pc}^\infty(I)$  if 
\begin{enumerate}
 \item     $\tau_{0}=\tau|_{[0, a_{1})}$ is continuous and convex;  \\  
 $\tau_{i}=\tau|_{[a_{i}, a_{n(i)})}$ is continuous and convex, $i=1, 2, \cdots;$  \\  
 
\item   $\tau(a_{i})=0, \tau^\prime(a_{i})>0,  i=1, 2, \dots;$\\
  
\item   $\tau (0)=0, \tau^\prime (0)=\alpha_{1}>1;$ \\

\item   $\sum_{i=1}^\infty \frac{1}{\tau^\prime (a_{i})}<\infty.$ \\
\end{enumerate}
For $\tau\in \mathcal{T}_{pc}^\infty(I), f\in L^1(I), f\ge 0$ the F-P operator $P_{\tau}$ can be represented as 
\begin{equation}
P_{\tau}f(x)=\frac{f(\tau_{0}^{-1}(x))}{\tau^\prime(\tau_{0}^{-1}(x))}\chi_{\tau[0, a_{1})} (x) 
+\sum_{i=1}^\infty\frac{f(\tau_{i}^{-1}(x))}{\tau^\prime(\tau_{i}^{-1}(x))}\chi_{\tau[a_{i}, a_{n(i)})} (x).
\end{equation}

The following results are proved in \cite{GIR}.

\begin{lemma}
Let $\tau\in \mathcal{T}_{pc}^\infty(I), f\in L^1(I), f\ge 0, f $  be  non-increasing. Then 
\begin{enumerate}
\item $P_{\tau}(f)\in L^1(I).$
\item $P_{\tau}(f)\ge 0.$
\item $P_{\tau}(f)$ is non-increasing.
\item $\parallel P_{\tau}(f)\parallel_\infty\le C\parallel f\parallel_\infty,$  where $C=\left(\frac{1}{\alpha_{ 1}}+\sum_{i=1}^\infty \frac{1}{\tau^\prime (a_{i})} \right).$
\end{enumerate}
\end{lemma}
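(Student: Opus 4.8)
The plan is to handle the four assertions in order. Claims (1) and (2) are essentially free: since $f\in L^1(I)$ and $P_\tau$ is an $L^1$-contraction (property (iii) of the Introduction), $P_\tau f\in L^1(I)$ with $\|P_\tau f\|_1\le\|f\|_1$ --- indeed $\|P_\tau f\|_1=\|f\|_1$, as $f\ge 0$ and $P_\tau$ preserves integrals (property (iv)) --- while $P_\tau f\ge 0$ is the positivity property (ii), also visible term by term in the series representation, each summand being a nonnegative function times an indicator.

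The substance is in (3), and this is where piecewise convexity enters. First I would note that every branch is strictly increasing with non-decreasing derivative: $\tau_0$ is convex on $[0,a_1)$ with $\tau'(0)=\alpha_1>0$, so $\tau_0'$ is non-decreasing, hence $\tau_0'\ge\alpha_1>0$ and $\tau_0$ is strictly increasing; and for $i\ge 1$, $\tau_i$ is convex on $[a_i,a_{n(i)})$ with $\tau'(a_i)>0$, so $\tau_i'$ is non-decreasing, $\tau_i'\ge\tau'(a_i)>0$, and $\tau_i$ is strictly increasing. Hence each inverse branch $\tau_i^{-1}$ is increasing, and --- crucially --- because $\tau(0)=0$ and $\tau(a_i)=0$, every branch-image ($\tau([0,a_1))$ and each $\tau([a_i,a_{n(i)}))$) is an interval $[0,\beta_i)$ with left endpoint $0$. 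On such an interval the $i$-th summand
\[
g_i(x)=\frac{f(\tau_i^{-1}(x))}{\tau'(\tau_i^{-1}(x))}
\]
is a product of two nonnegative non-increasing functions: $f\circ\tau_i^{-1}$ is non-increasing since $f$ is non-increasing and $\tau_i^{-1}$ increasing, and $1/(\tau'\circ\tau_i^{-1})$ is non-increasing since $\tau'$ is non-decreasing along the branch and $\tau_i^{-1}$ is increasing. Hence $g_i$ is non-increasing on $[0,\beta_i)$, and extending it by $0$ on $[\beta_i,1]$ keeps it non-increasing on all of $I$, the only possible jump (at $\beta_i$) being downward. Finally, the partial sums $S_N=\sum_{i=0}^N g_i\chi_{[0,\beta_i)}$ are non-increasing and, the summands being nonnegative, increase pointwise to $P_\tau f$; so $P_\tau f=\sup_N S_N$ is non-increasing.

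For (4) we may assume $\|f\|_\infty<\infty$, the bound being vacuous otherwise. The branchwise estimates give, for each $x$ in the relevant image interval, $f(\tau_i^{-1}(x))\le\|f\|_\infty$ and $\tau'(\tau_i^{-1}(x))\ge\tau'(a_i)$ for $i\ge1$, and $\tau'(\tau_0^{-1}(x))\ge\alpha_1$; bounding the $i=0$ term by $\|f\|_\infty/\alpha_1$ and the $i$-th term by $\|f\|_\infty/\tau'(a_i)$ therefore yields
\[
P_\tau f(x)\;\le\;\frac{\|f\|_\infty}{\alpha_1}+\sum_{i=1}^\infty\frac{\|f\|_\infty}{\tau'(a_i)}\;=\;C\,\|f\|_\infty
\]
for a.e.\ $x\in I$, the series converging by hypothesis (4) on $\mathcal{T}_{pc}^\infty(I)$; taking essential suprema gives $\|P_\tau f\|_\infty\le C\|f\|_\infty$. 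I expect the main obstacle to be the careful bookkeeping in (3): one must verify not merely that each summand is non-increasing on its support, but that multiplying by $\chi_{[0,\beta_i)}$ preserves monotonicity on all of $I$ --- which works exactly because $\tau(a_i)=0$ forces every branch-image to have left endpoint $0$ --- and that monotonicity survives the infinite summation, best argued through monotone pointwise convergence of the nonnegative partial sums rather than through $L^1$ limits.
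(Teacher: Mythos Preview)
Your proof is correct. Note that the paper does not itself prove this lemma --- it cites \cite{GIR} --- but your argument follows exactly the approach the paper uses for the analogous Lemma~\ref{Lem1} (for the finite-branch approximants $\tau_n$): parts (1)--(2) are general F--P properties, part (3) follows because each summand in the series representation is non-increasing, and part (4) is the termwise bound $\tau'(\tau_i^{-1}(x))\ge\tau'(a_i)$ coming from convexity. Your treatment of (3) is in fact more careful than the paper's one-line ``it can be easily shown'': you make explicit the role of the hypothesis $\tau(a_i)=0$ in ensuring that each branch image is $[0,\beta_i)$, so that extension by zero preserves monotonicity, and you handle the passage to the infinite sum via monotone pointwise limits of the partial sums.
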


\begin{proposition} 
If  $f\ge 0 $  and $f$ is non-increasing, then $f(x)\le \frac 1x \lambda(f), \text~{for}~ x\in [0, 1],$ where 
$$\lambda(f)=\int_0^1 f(x)d\lambda(x).$$ \end{proposition}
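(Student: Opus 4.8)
The plan is to read off the bound directly from monotonicity, with no machinery needed. The only content is that a non-increasing nonnegative function cannot be large near $x$ without forcing its integral over $[0,x]$ to be large, and that integral is bounded above by the total integral $\lambda(f)$.

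Concretely, I would fix $x\in(0,1]$. Since $f$ is non-increasing, for every $t\in[0,x]$ we have $f(t)\ge f(x)$, and since $f\ge 0$ on $[x,1]$ we have $\int_x^1 f\,d\lambda\ge 0$. Combining these,
\[
\lambda(f)=\int_0^1 f\,d\lambda \;=\;\int_0^x f\,d\lambda+\int_x^1 f\,d\lambda\;\ge\;\int_0^x f(x)\,d\lambda\;=\;x\,f(x).
\]
Dividing by $x>0$ gives $f(x)\le \frac1x\lambda(f)$. For $x=0$ the asserted inequality is vacuous since the right-hand side is $+\infty$, so the claim holds on all of $[0,1]$.

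The only point that needs a word of care is that $f$ is an $L^1$ function, so ``$f(x)$'' should be understood via the non-increasing representative (equivalently, the inequality is asserted for a.e.\ $x$, or for the canonical right-continuous version); once a monotone representative is chosen the pointwise comparison $f(t)\ge f(x)$ on $[0,x]$ is literally true. There is no substantive obstacle here — the estimate is an immediate consequence of monotonicity — and the proposition is included only because this elementary pointwise bound is exactly what is repeatedly needed to control $\|P_\tau f\|_\infty$ in terms of $\lambda(f)=\|f\|_1$ (a quantity preserved by $P_\tau$) in the Lasota–Yorke type inequalities that follow.
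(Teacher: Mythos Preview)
Your proof is correct and is the standard elementary argument: $x f(x)\le \int_0^x f\,d\lambda\le \int_0^1 f\,d\lambda=\lambda(f)$. The paper does not actually prove this proposition---it is quoted from \cite{GIR} as background---so there is nothing further to compare; your argument is exactly the one-line monotonicity estimate one expects here.
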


\begin{lemma}\label{Le4} If $f: I \to \mathbb{R}^+$ is non-increasing and 
$\tau\in \mathcal{T}_{pc}^\infty(I),$ then 
 \begin{equation}\label{LYI}\parallel P_{\tau}(f)\parallel_\infty\le \frac{1}{\alpha_{ 1}}\parallel f\parallel_\infty + D\parallel f\parallel_1, \end{equation}    where $D=\left(\sum_{i=1}^\infty \frac{1}{a_{i}}\frac{1}{\tau^\prime(a_{i})}\right).$
\end{lemma}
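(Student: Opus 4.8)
The plan is to derive a single pointwise upper bound for $P_\tau f(x)$ that is uniform in $x\in(0,1)$, and then pass to the supremum. One could instead invoke that $P_\tau f$ is again non-increasing, so that its supremum is attained as $x\to 0^+$, but since the bound obtained below is uniform this will not be needed.

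First I would examine the representation of $P_\tau f$ term by term. On the $i$-th branch, $\tau_i$ is convex on $[a_i,a_{n(i)})$ with $\tau(a_i)=0$ and $\tau'(a_i)>0$, hence strictly increasing and with non-decreasing derivative; since $\tau_i^{-1}(x)\ge a_i$ this gives $\tau'(\tau_i^{-1}(x))\ge\tau'(a_i)$, while $f$ being non-increasing gives $f(\tau_i^{-1}(x))\le f(a_i)$. The same reasoning on $[0,a_1)$, using $\tau(0)=0$ and $\tau'(0)=\alpha_1>1$, yields $\tau'(\tau_0^{-1}(x))\ge\alpha_1$ and $f(\tau_0^{-1}(x))\le\|f\|_\infty$. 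Discarding the non-negative indicator factors and summing, I expect to obtain
\[
P_\tau f(x)\ \le\ \frac{\|f\|_\infty}{\alpha_1}\ +\ \sum_{i=1}^\infty\frac{f(a_i)}{\tau'(a_i)}\qquad\text{for all }x\in(0,1).
\]

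The next step is to replace $f(a_i)$ by a quantity controlled by $\|f\|_1$, which is exactly the Proposition above: since $f\ge 0$ is non-increasing, $f(a_i)\le\frac{1}{a_i}\lambda(f)=\frac{1}{a_i}\|f\|_1$. Inserting this bound and taking the supremum over $x$ yields the asserted inequality with $D=\sum_{i\ge 1}\frac{1}{a_i}\,\frac{1}{\tau'(a_i)}$; this series is finite because $a_i\ge a_1>0$ for all $i\ge 1$, so that $D\le\frac{1}{a_1}\sum_{i\ge 1}\frac{1}{\tau'(a_i)}<\infty$ by condition~(4). I do not expect a genuine obstacle here; the only points needing a little care are the legitimacy of the uniform term-by-term estimate and of the interchange with the countable sum (both justified by non-negativity of the summands, via monotone convergence), and the two derivative lower bounds, which amount to the elementary fact that an increasing convex function has non-decreasing derivative, anchored by the normalizations $\tau(0)=0$ and $\tau(a_i)=0$.
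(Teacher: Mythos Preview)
Your proposal is correct and follows essentially the same line as the paper. The paper defers the proof of this lemma to \cite{GIR}, but its proof of the analogous inequality for the approximating maps $\tau_n$ (Lemma~\ref{Lem3}) proceeds by first using that $P_{\tau_n}f$ is non-increasing to reduce to evaluating at $x=0$, and then bounding each summand via convexity and the proposition $f(a_i)\le \lambda(f)/a_i$ exactly as you do; your uniform pointwise bound simply bypasses that preliminary reduction, which---as you yourself note---is harmless since the termwise maxima are attained at $x=0$ anyway.
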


\begin{theorem}\label{ThmSec2}
Let $\tau\in \mathcal{T}_{pc}^\infty(I).$ Then $\tau$ admits an ACIM $\mu=f^*\cdot \lambda$ with non-increasing density function  $f^*.$ The ACIM $\mu$ is unique and the system $(\tau, \mu)$ is exact.
\end{theorem}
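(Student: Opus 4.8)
The plan is to produce $f^*$ as a fixed point of $P_\tau$ inside the cone of non-increasing densities. I would start from the constant density $f_0\equiv 1$, which is non-increasing, and iterate $P_\tau$. By the first Lemma above (preservation of non-increasing densities) each $P_\tau^{n}f_0$ is a non-increasing, nonnegative element of $L^1(I)$, and since $P_\tau$ is positive and integral-preserving, $\|P_\tau^{n}f_0\|_1=1$ for every $n$. Plugging $f=P_\tau^{n}f_0$ into the Lasota--Yorke inequality \eqref{LYI} of \lemref{Le4} gives the scalar recursion $\|P_\tau^{n+1}f_0\|_\infty\le\alpha_1^{-1}\|P_\tau^{n}f_0\|_\infty+D$, so $\sup_n\|P_\tau^{n}f_0\|_\infty\le M:=1+D\alpha_1/(\alpha_1-1)<\infty$. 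Hence the Ces\`aro averages $g_N:=N^{-1}\sum_{n=0}^{N-1}P_\tau^{n}f_0$ are non-increasing, satisfy $0\le g_N\le M$ and $\int_0^1 g_N\,d\lambda=1$. Helly's selection theorem gives a subsequence $g_{N_j}$ converging pointwise on $[0,1]$ to a non-increasing $f^*\in[0,M]$, and bounded convergence upgrades this to $L^1$-convergence, so $\int_0^1 f^*\,d\lambda=1$. Finally $\|P_\tau g_N-g_N\|_1=N^{-1}\|P_\tau^{N}f_0-f_0\|_1\le 2/N\to 0$; since $P_\tau$ is $L^1$-continuous, letting $j\to\infty$ yields $P_\tau f^*=f^*$, and by property (vii) $\mu=f^*\cdot\lambda$ is an ACIM with non-increasing density.

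\textbf{Uniqueness.} I would combine two observations. If $f_1,f_2$ are invariant densities, positivity of $P_\tau$ gives $P_\tau(\min(f_1,f_2))\le\min(f_1,f_2)$, and since $P_\tau$ preserves integrals this is an equality a.e.; hence $\min(f_1,f_2)$ is invariant, and by the usual ergodic decomposition two distinct \emph{ergodic} ACIMs have essentially disjoint supports. On the other hand, every ACIM should charge each interval $[0,\eps)$: every branch is convex with $\tau_i(a_i)=0$ and positive one-sided derivative there, hence strictly increasing onto an interval with left endpoint $0$; since $\tau'(0)=\alpha_1>1$ no orbit can remain in $[0,a_1)$, and one verifies that $\bigcup_{n\ge 0}\tau^{-n}([0,\eps))$ has full Lebesgue measure, so invariance forces $\mu([0,\eps))>0$. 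With $0$ in the support of every ACIM, disjointness is impossible, so there is a single ergodic ACIM; thus $\mu$ is the unique ACIM and $f^*$ the unique stationary density.

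\textbf{Exactness.} By the Lin / Lasota--Mackey criterion, exactness of $(\tau,\mu)$ is equivalent to $\|P_\tau^{n}f-f^*\|_1\to 0$ for every density $f$ supported on $\mathrm{supp}\,\mu$. For a non-increasing density $f$ this is immediate from what is already set up: \eqref{LYI} bounds $\sup_n\|P_\tau^{n}f\|_\infty$, so $\{P_\tau^{n}f\}$ is relatively compact in $L^1$ by Helly, every limit point is a non-increasing invariant density, hence $=f^*$ by uniqueness, so $P_\tau^{n}f\to f^*$. For a general density $f$ I would first use the mean ergodic theorem together with uniqueness of $f^*$ to get $N^{-1}\sum_{n=0}^{N-1}P_\tau^{n}f\to f^*$ in $L^1$ (already giving ergodicity), and then upgrade this Ces\`aro convergence to $P_\tau^{n}f\to f^*$ by the convex-map argument of \cite{LY1}, using convexity of the branches to control the shape of the iterates.

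\textbf{Main obstacle.} Existence is routine once \lemref{Le4} is in hand. The two real difficulties are: (i) the support claim in the uniqueness step, that $0$ lies in the support of \emph{every} ACIM --- for the class $\mathcal{T}_{pc}^\infty(I)$, whose partition points may accumulate anywhere in $[a_1,1]$, this requires a careful covering argument; and (ii) in the exactness step, upgrading $P_\tau^{n}f\to f^*$ from non-increasing densities, where \eqref{LYI} applies directly, to arbitrary densities --- since $\tau$ need not be expanding, the usual bounded-variation machinery is unavailable and one must lean on convexity, exactly as in \cite{LY1}.
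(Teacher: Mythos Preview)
The paper does not prove this theorem: Section~\ref{Sec2} is a review section, and Theorem~\ref{ThmSec2} (together with the two lemmas and the proposition preceding it) is simply quoted from \cite{GIR} with the sentence ``The following results are proved in \cite{GIR}.'' There is therefore no proof in the paper to compare against.

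That said, your existence argument is correct and is exactly the template the authors use when they \emph{do} prove an existence result in this paper (Theorem~\ref{Thm3} for the truncated maps $\tau_n$): iterate the Lasota--Yorke inequality of \lemref{Le4} to get a uniform $L^\infty$ bound on $\{P_\tau^n f_0\}$, then pass to a limit of Ces\`aro averages. The only cosmetic difference is that the paper invokes Yosida--Kakutani \cite{YK} for the $L^1$ convergence of the averages, while you use Helly plus dominated convergence; both are fine here (and indeed the paper's Lemma~\ref{convL1} is essentially your Helly step).

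Your uniqueness and exactness sketches are reasonable outlines, and you correctly flag the two genuine issues: (i) showing that $0$ lies in the support of every ACIM when partition points may accumulate arbitrarily in $[a_1,1]$, and (ii) passing from non-increasing to arbitrary densities in the exactness argument without BV machinery. These are precisely the points where the argument for countably many branches goes beyond \cite{LY1}, and they are handled in \cite{GIR} rather than here. One small correction: in your uniqueness paragraph, from $\mu(\tau^{-n}A)=\mu(A)$ and $\lambda\bigl(\bigcup_n\tau^{-n}A\bigr)=1$ you cannot directly conclude $\mu(A)>0$ unless the sets $\tau^{-n}A$ are nested or you otherwise control the union; you would need to argue, e.g., that $\tau^{-1}([0,\eps))\supset[0,\eps)$ (which holds since $\tau'(0)=\alpha_1>1$), so the union is increasing.
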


\subsection
{ACIMs for piecewise convex maps  in $\mathcal{T}_{pc}^{\infty, 0}(I)$}

For the maps  in $\mathcal{T}_{pc}^{\infty, 0}(I)$  the point $0$ is  the limit point of partition points.

 We say that $\tau\in \mathcal{T}_{pc}^{\infty,0}(I)$  if 
\begin{enumerate}
 \item     $\tau_{ -j}=\tau|_{[a_{0, -(j+1)}, a_{0,-j})}$ is continuous and convex, $ j=1, 2, \dots$ ;  \\  
 $\tau_{i}=\tau|_{[a_{i}, a_{n(i)})}$ is continuous and convex,  $i=1, 2, \cdots;$  \\  
 
\item   $\tau(a_{0,-j})=0, \tau^\prime(a_{0,-j})>0,  j=1, 2, \dots;$\\

$\tau(a_{i})=0, \tau^\prime(a_{i})>0,  i=1, 2, \dots;$\\

\item   $\sum_{i=1}^\infty \frac{1}{\tau^\prime(a_{i})} < \infty;$\\

\item $D_{1}=\sum_{j=1}^\infty \frac{1}{\tau^\prime(a_{0,-j})}< 1.$ \\
\end{enumerate}

\begin{remark} The Condition 3 and  the Condition 4 can be replaced by the following condition:
$$(3^+) \ \ \        \sum_{j=1}^\infty \frac{1}
{\tau'(a_{0,-j})}+    \sum_{i=1}^\infty \frac{1}{\tau^\prime (a_{i})}<\infty.                     $$\\
If $(3^+)$ is satisfied, then we can find  an integer  $J\ge 1$ such that
$$ \sum_{j=J}^\infty \frac{1}
{\tau'(a_{0,-j})}< 1 ,$$
and after proper renaming of  partition points, the Condition 3 and  the Condition 4 are satisfied.
\end{remark} 
The following results are proved in \cite{GIR}.
\begin{lemma}\label{Lem6} 
Let $\tau\in \mathcal{T}_{pc}^{\infty,0}(I), f\in L^1(I), f\ge 0, f $ be non-increasing. Then 
\begin{enumerate}
\item $P_{\tau}(f)\in L^1(I).$
\item $P_{\tau}(f)\ge 0.$
\item $P_{\tau}(f)$ is non-increasing.
\item $\parallel P_{\tau}(f)\parallel_\infty\le C\parallel f\parallel_\infty,$  where $C=\left(D_1+\sum_{i=1}^\infty \frac{1}{\tau^\prime (a_{i})} \right).$
\end{enumerate}
\end{lemma}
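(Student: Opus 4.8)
The plan is to split $P_\tau f$ into the contributions of its individual branches and to estimate each contribution separately; the scheme is the same one that establishes the corresponding statement for maps in $\mathcal{T}_{pc}^\infty(I)$, the only change being that the single branch $\tau_0=\tau|_{[0,a_1)}$ is now replaced by the sequence of branches $\tau_{-j}$, $j\ge 1$, accumulating at $0$, so that the constant $1/\alpha_1$ is replaced by $D_1=\sum_{j\ge1}1/\tau'(a_{0,-j})$.

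First I would record the single-branch fact. Let $\sigma=\tau|_{[a,b)}$ be any of the branches $\tau_{-j}$ or $\tau_i$; by Conditions 1 and 2, $\sigma$ is continuous, convex, with $\sigma(a)=0$ and $\sigma'(a)>0$ (for $\tau_{-j}$ the left endpoint is $a=a_{0,-(j+1)}$, so $\sigma(a)=\tau(a_{0,-(j+1)})=0$ and $\sigma'(a)=\tau'(a_{0,-(j+1)})>0$). Convexity makes $\sigma'$ non-decreasing, hence $\sigma'\ge\sigma'(a)>0$ throughout and $\sigma$ is a strictly increasing bijection of $[a,b)$ onto $[0,\sigma(b^-))$. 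Put
\[
h_\sigma(x)=\frac{f(\sigma^{-1}(x))}{\sigma'(\sigma^{-1}(x))}\quad\text{for }x\in[0,\sigma(b^-)),\qquad h_\sigma(x)=0\ \text{ otherwise.}
\]
Writing $y=\sigma^{-1}(x)$, which increases with $x$, the numerator $f(y)$ is non-increasing and the denominator $\sigma'(y)$ is non-decreasing and positive, so $h_\sigma$ is non-increasing on $[0,\sigma(b^-))$; since it then drops to $0$, $h_\sigma$ is non-increasing on all of $I$, non-negative, satisfies $\|h_\sigma\|_\infty\le\|f\|_\infty/\sigma'(a)$, and $\int_I h_\sigma\,d\lambda=\int_a^b f\,d\lambda$ by the change of variables $x=\sigma(y)$. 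This is the only place convexity enters.

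By the series representation of $P_\tau$ from Section~\ref{Sec2} one has, as a series of non-negative functions,
\[
P_\tau f = \sum_{j=1}^\infty h_{\tau_{-j}} + \sum_{i=1}^\infty h_{\tau_i}.
\]
Since the branch domains $[a_{0,-(j+1)},a_{0,-j})$ and $[a_i,a_{n(i)})$ cover $I$ up to a countable set, summing the identities $\int_I h_\sigma\,d\lambda=\int_a^b f\,d\lambda$ gives $\sum_j\int h_{\tau_{-j}}+\sum_i\int h_{\tau_i}=\int_I f\,d\lambda=\|f\|_1<\infty$; by monotone convergence the series therefore converges a.e. and in $L^1$ to an $L^1$ function, which is (1), while (2) is immediate since every term is $\ge0$ (equivalently, $P_\tau$ is positive). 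For (3), each partial sum is a finite sum of non-increasing functions, hence non-increasing, and the pointwise (monotone) limit of non-increasing functions is non-increasing. For (4) we may assume $\|f\|_\infty<\infty$; then the single-branch bound yields
\[
\|P_\tau f\|_\infty \le \sum_{j=1}^\infty\frac{\|f\|_\infty}{\tau'(a_{0,-(j+1)})} + \sum_{i=1}^\infty\frac{\|f\|_\infty}{\tau'(a_i)} \le \Bigl(D_1+\sum_{i=1}^\infty\frac{1}{\tau'(a_i)}\Bigr)\|f\|_\infty = C\|f\|_\infty,
\]
using $\sum_{j\ge1}1/\tau'(a_{0,-(j+1)})\le\sum_{j\ge1}1/\tau'(a_{0,-j})=D_1$, with $C<\infty$ by Conditions 3 and 4.

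I do not expect a genuine obstacle here, as the argument runs entirely parallel to the finite-$\tau_0$ case treated for $\mathcal{T}_{pc}^\infty(I)$. The one point requiring care is the justification that the infinite series for $P_\tau f$ may be handled term by term — both in the monotone-convergence step used for (1) and (3) and in the supremum estimate for (4) — and this is precisely where the summability Conditions 3 and 4 (the latter controlling the infinitely many branches accumulating at $0$) are invoked.
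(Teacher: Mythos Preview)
The paper does not actually prove this lemma here---it is quoted from \cite{GIR}---so there is no in-text proof to compare against. Your argument is correct and follows exactly the branch-by-branch scheme the paper itself uses when it \emph{does} give proofs of the parallel statements for the finitely-branched approximants $\tau_n$ (Lemma~\ref{Lem1} and Lemma~\ref{Lem3}): write $P_\tau f$ as the sum of the single-branch contributions $h_\sigma$, use convexity to make $\sigma'$ non-decreasing so that each $h_\sigma$ is non-increasing and bounded by $\|f\|_\infty/\sigma'(a)$, and then sum. Your treatment is in fact more careful than the paper's analogous argument, since you explicitly justify the term-by-term handling of the infinite series via monotone convergence and the summability hypotheses (3) and (4); you also observe correctly that the left endpoints of the $\tau_{-j}$ branches are $a_{0,-(j+1)}$, so the resulting sum $\sum_{j\ge1}1/\tau'(a_{0,-(j+1)})$ is even slightly smaller than $D_1$, making the stated constant $C$ a harmless overestimate.
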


\begin{lemma}\label{Lem7} If  $f: [0, 1] \to \mathbb{R}^+$ is non-increasing and $\tau\in \mathcal{T}_{pc}^{\infty, 0}(I),$   then   \begin{equation}\label{LYI}\parallel P_{\tau}(f)\parallel_\infty\le D_{1}\parallel f\parallel_\infty +D\parallel f\parallel_1, \end{equation}    where $D=\sum_{i=1}^\infty \frac{1}{a_{i}\tau^\prime(a_{i})}.$

\end{lemma}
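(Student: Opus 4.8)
\medskip
\noindent\textbf{Proof proposal.}
The plan is to reduce the claim to a pointwise estimate on $P_\tau f$ and then bound the Frobenius--Perron operator branch by branch, just as in the proof of \lemref{Le4}. We may assume $\|f\|_\infty<\infty$, since otherwise the inequality is trivial. By \lemref{Lem6}, $P_\tau f\in L^1(I)$ and is non-negative, so it suffices to establish the pointwise bound $P_\tau(f)(x)\le D_1\|f\|_\infty+D\|f\|_1$ for a.e.\ $x\in I$ and then take the essential supremum. First I would write the operator for $\tau\in\mathcal{T}_{pc}^{\infty,0}(I)$, $f\ge0$, in the form
\[
P_\tau f(x)=\sum_{j=1}^\infty\frac{f(\tau_{-j}^{-1}(x))}{\tau'(\tau_{-j}^{-1}(x))}\,\chi_{\tau[a_{0,-(j+1)},a_{0,-j})}(x)+\sum_{i=1}^\infty\frac{f(\tau_i^{-1}(x))}{\tau'(\tau_i^{-1}(x))}\,\chi_{\tau[a_i,a_{n(i)})}(x),
\]
so that, for a.e.\ $x$, it is enough to bound each surviving summand and then sum over all $j\ge1$, resp.\ all $i\ge1$.

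The two ingredients are the same as in \lemref{Le4}. First, each branch is convex on its interval of definition, hence $\tau'$ is non-decreasing there; the left endpoint of $\tau_{-j}$ is $a_{0,-(j+1)}$ and that of $\tau_i$ is $a_i$, and Condition~2 gives $\tau=0$ with $\tau'>0$ at those endpoints, so each branch is in fact increasing. Thus $\tau_{-j}^{-1}(x)\ge a_{0,-(j+1)}$ and $\tau_i^{-1}(x)\ge a_i$, whence by monotonicity of $\tau'$,
\[
\frac{1}{\tau'(\tau_{-j}^{-1}(x))}\le\frac{1}{\tau'(a_{0,-(j+1)})},\qquad \frac{1}{\tau'(\tau_i^{-1}(x))}\le\frac{1}{\tau'(a_i)}.
\]
Second, for the numerators I would combine the monotonicity of $f$ with the Proposition above (and $\lambda(f)=\|f\|_1$, since $f\ge0$): $f(\tau_{-j}^{-1}(x))\le f(a_{0,-(j+1)})\le\|f\|_\infty$, while $f(\tau_i^{-1}(x))\le f(a_i)\le\frac{1}{a_i}\|f\|_1$. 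Putting these together and summing (the left endpoints of $\tau_{-1},\tau_{-2},\dots$ being $a_{0,-2},a_{0,-3},\dots$) yields, for a.e.\ $x$,
\[
\sum_{j\ge1}\frac{f(\tau_{-j}^{-1}(x))}{\tau'(\tau_{-j}^{-1}(x))}\le\|f\|_\infty\sum_{m\ge2}\frac{1}{\tau'(a_{0,-m})}\le D_1\|f\|_\infty,\qquad \sum_{i\ge1}\frac{f(\tau_i^{-1}(x))}{\tau'(\tau_i^{-1}(x))}\le\Big(\sum_{i\ge1}\frac{1}{a_i\,\tau'(a_i)}\Big)\|f\|_1=D\,\|f\|_1,
\]
with $D_1$ as in Condition~4 and $D$ as in the statement. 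Adding the two bounds and taking the essential supremum over $x$ proves the inequality.

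I do not expect a serious obstacle here: the analytic input — convexity makes $\tau'$ non-decreasing, and the Proposition converts the pointwise values $f(a_i)$ into multiples of $\|f\|_1$ (finite because the $a_i$, $i\ge1$, stay bounded away from $0$) — is identical to \lemref{Le4}, and Conditions~3--4 guarantee $D,D_1<\infty$ with $D_1<1$. The only thing requiring care is the bookkeeping near $0$: one must confirm that every branch vanishes at its \emph{left} endpoint — the endpoint where Condition~2 controls the one-sided derivative — so that convexity may legitimately be used to push $1/\tau'$ down to that endpoint value; and one should note that, in contrast with $\mathcal{T}_{pc}^\infty(I)$, where the lone branch rooted at $0$ supplies the contracting coefficient $1/\alpha_1<1$, here that role is played collectively by the family $\{\tau_{-j}\}_{j\ge1}$ accumulating at $0$, whose combined contribution $\sum_{m\ge2}\tau'(a_{0,-m})^{-1}\le D_1<1$ is exactly what Condition~4 controls.
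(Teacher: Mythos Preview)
Your proposal is correct. The paper does not prove this lemma itself (it is quoted from \cite{GIR}), but it proves the closely analogous Lemma~\ref{Lem3} for the approximating maps $\tau_n$, and that proof follows the same split---branches accumulating at $0$ contribute the $D_1\|f\|_\infty$ term, branches rooted at the $a_i\ge a_1>0$ contribute $D\|f\|_1$ via the Proposition $f(a_i)\le\|f\|_1/a_i$. The one difference is an economy of argument: rather than bounding $P_\tau f(x)$ for general $x$ and invoking convexity to push $1/\tau'$ down to its left-endpoint value, the paper first uses that $P_\tau f$ is non-increasing (Lemma~\ref{Lem6}(3)) to conclude $\|P_\tau f\|_\infty=P_\tau f(0)$, and at $x=0$ the inverse images \emph{are} the left endpoints $a_{0,-(j+1)}$, $a_i$, so the terms $f(a)/\tau'(a)$ appear exactly, with no inequality needed at that step. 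Your route gives the same constants and has the mild advantage of not relying on part~(3) of Lemma~\ref{Lem6}; the paper's route is a line shorter.
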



\begin{theorem}\label{ThmSec3}
Let $\tau\in \mathcal{T}_{pc}^{\infty,0}(I).$ Then $\tau$ admits an ACIM $\mu=f^*\cdot \lambda$ with non-increasing density function  $f^*.$  The ACIM $\mu$ is unique and the system $(\tau, \mu)$ is exact.
\end{theorem}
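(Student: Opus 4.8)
The plan is to run the classical Lasota--Yorke / Krylov--Bogolyubov argument inside the cone $\mathcal{C}$ of non-negative non-increasing functions on $I$. The two structural ingredients are already in hand: by Lemma~\ref{Lem6}, $P_\tau$ maps $\mathcal{C}$ into itself and preserves the $L^1$ norm there, and by Lemma~\ref{Lem7} one has the Lasota--Yorke inequality $\|P_\tau f\|_\infty\le D_1\|f\|_\infty+D\|f\|_1$ on $\mathcal{C}$, whose decisive feature is $D_1<1$ (here $D_1<1$ plays the role that the strong repeller plays in the finite-branch theory).

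\textbf{A uniform $L^\infty$ bound on the iterates.} Take $f_0\equiv 1\in\mathcal{C}$, which is a density. By Lemma~\ref{Lem6}, $P_\tau^k f_0\in\mathcal{C}$ and $\|P_\tau^k f_0\|_1=1$ for every $k\ge 0$. Iterating Lemma~\ref{Lem7} gives $\|P_\tau^{k+1}f_0\|_\infty\le D_1\|P_\tau^k f_0\|_\infty+D$, whence $\|P_\tau^k f_0\|_\infty\le 1+\frac{D}{1-D_1}=:M$ for all $k$. Therefore the Cesàro averages $g_n:=\frac1n\sum_{k=0}^{n-1}P_\tau^k f_0$ are non-increasing densities with $\|g_n\|_\infty\le M$.

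\textbf{Existence of the ACIM.} Since $\{g_n\}$ is a uniformly bounded family of monotone functions, Helly's selection theorem yields a subsequence $g_{n_j}$ converging pointwise a.e.\ to a non-increasing $f^*\ge 0$; as $0\le g_{n_j}\le M$ on $I$, dominated convergence upgrades this to $g_{n_j}\to f^*$ in $L^1$, so $\|f^*\|_1=1$. Moreover $P_\tau g_n-g_n=\frac1n\big(P_\tau^n f_0-f_0\big)$ has $L^1$-norm at most $2/n\to 0$, and $P_\tau$ is continuous on $L^1$, so passing to the limit gives $P_\tau f^*=f^*$. By property (vii) of the F--P operator, $\mu=f^*\cdot\lambda$ is an ACIM of $\tau$ with non-increasing density; the Proposition moreover forces $f^*(x)\le 1/x$ on $I$.

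\textbf{Uniqueness, exactness, and the main difficulty.} The Lasota--Yorke inequality with $D_1<1$ makes $P_\tau$ quasi-compact on the space of (restrictions to $I$ of) bounded monotone functions, so its spectrum on the unit circle is finite-dimensional; since $P_\tau$ is a positive, integral-preserving (Markov) operator, this peripheral part has a cyclic structure, with the eigenvalue $1$ carried by non-increasing fixed densities. One then has to show the peripheral spectrum collapses to the single simple eigenvalue $1$. This uses the geometry: every branch $\tau_i$ is convex with $\tau(a_i)=0$ and every $\tau_{-j}$ is convex with $\tau(a_{0,-j})=0$, so each branch maps onto an interval anchored at $0$, which precludes nontrivial $\tau$-invariant sets and periodic components --- the same mechanism as in \cite{LY1} for finitely many branches and in \cite{HR} for infinitely many. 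Concretely, for two fixed densities $f_1,f_2\in\mathcal{C}$ one shows the Jordan decomposition of $f_1-f_2$ is $P_\tau$-invariant and, since a non-increasing fixed density is positive on a maximal interval $[0,b)$ that must equal $[0,1)$, concludes $f_1=f_2$; triviality of the peripheral spectrum then yields $P_\tau^n g\to\big(\int_I g\,d\lambda\big)f^*$ in $L^1$ for all $g\in L^1$, i.e.\ exactness of $(\tau,\mu)$. I expect this last step --- ruling out decomposable or periodic behaviour of the absolutely continuous measure while the partition points accumulate at $0$ --- to be the genuine obstacle; the existence part is routine once Lemmas~\ref{Lem6}--\ref{Lem7} are available.
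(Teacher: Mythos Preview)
The paper does not actually prove this theorem: it is quoted from \cite{GIR}, as the sentence ``The following results are proved in \cite{GIR}'' just before Lemma~\ref{Lem6} indicates. So there is no proof in the present paper to compare your proposal against.

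That said, your existence argument is essentially the template the authors use elsewhere in the paper: compare your Ces\`aro/Helly argument with the proof of Theorem~\ref{Thm3}, where the same Lasota--Yorke bound is iterated to get a uniform $L^\infty$ estimate and then Yosida--Kakutani is invoked instead of Helly's theorem. Either route works and yields the same non-increasing fixed density; your use of Helly plus dominated convergence is a perfectly good substitute, and the paper in fact packages exactly this substitution as Lemma~\ref{convL1}.

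Where your proposal is genuinely incomplete is the uniqueness/exactness part, and you acknowledge this. Your sketch (quasi-compactness on monotone functions, analysis of the peripheral spectrum, ruling out wandering/periodic pieces via the ``every branch is anchored at $0$'' geometry) is the right heuristic and is how \cite{LY1} proceeds in the finite-branch case, but the passage from ``support of a non-increasing fixed density is an interval $[0,b)$'' to ``$b=1$'' and the exclusion of nontrivial roots of unity in the peripheral spectrum require real work when the partition accumulates at $0$; that work lives in \cite{GIR}, not here. If you want a self-contained write-up, you would need to supply that argument explicitly rather than gesture at it.
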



  \section{Ulam's method for  piecewise convex maps with countable number of branches }\label{Sec3}
  In this section our main goal is to develop Ulam's method for stationary densities of ACIMs for piecewise convex maps with countably infinite number of branches. We do this in the following two steps.
  \subsection{Approximation of  maps with infinite number of branches} \label{Sec4}

In this section we approximate piecewise convex maps with  countably infinite number of branches by piecewise convex maps with a finite number of branches.

  We concentrate on a case of a map $\tau \in  \mathcal T^{\infty,0}_{pc}(I)$.  On the example of point 0, we show how to perform
an approximation of $\tau$ with a point of a partition that is a limit point of other points of the partition. The method can be applied to maps in  $\mathcal T^{\infty}_{pc}(I)$,
that have points with this property without substantial changes.

Let
$\tau\in \mathcal T^{\infty,0}_{pc}(I)$.  We assume that there are no other limit points of the partition points.
  For simplicity we 
rename the partition points. The new partition points are $a_n$, $n=0,1,2,\dots$, with $a_0=1$, $a_{n+1}<a_n$, $n\ge 0$ and
$\lim_{n\to\infty} a_n=0$. 
Then, the assumptions (3) and (4) of Section 2.2 are restated as:\\
(3') There exists an $N\ge 1$ such that  
\begin{equation}\label {new constantsD1} D_1=\sum_{n=N+1}^\infty \frac 1{\tau'(a_n)} < 1 .
\end{equation}

 Then, the Lemma 3 and the Lemma 4 of Section 2.2 hold with changed constants
\begin{equation}\begin{split}\label {new constantsCD}
C&=D_1+ \sum_{n=1}^N \frac {1}{\tau'(a_n)}=\sum_{n=1}^\infty \frac {1}{\tau'(a_n)},\\
D&=\sum_{n=1}^N \frac 1{a_n\tau'(a_n)}.
\end{split}\end{equation}

For $n\ge N$, we   construct a sequence $\{\tau_n\}_{n=N}^\infty$ of maps  $\tau_n: I \to I $  s.t.   $\tau_n$ has a  finite number of branches  and the sequence  $\tau_n$  converges to $\tau$  almost uniformly.  Using supremum norms and Lasota-Yorke type inequalities,  we  prove the existence of  stationary densities $f_n$ of ACIMs $\mu_n$ for $\tau_n$. We  approximate $\tau: I \to I$ with the following sequence  of maps $\tau_n: I \to I$, $n\ge N$,  with a finite number of branches:  
\begin{equation}\label{seqapprox} \tau_n(x)=\begin {cases} x/a_n  \ ,         \  0\le x< a_n;\\
                           \tau(x) \ ,         \  a_n\le x\le 1.
\end{cases}
\end{equation}
In the following, we show that for each $n\ge 0,$ the map $\tau_n$  has an ACIM. 
It can be easily shown that each $\tau_n$  is a piecewise convex maps with  the finite partition $\{1=a_0, a_1, a_2, \cdots, a_n, a_{n+1}= 0\}$   and $\tau_n$ satisfies  following conditions:
\begin{enumerate}
 \item      $\tau_{n_j}=\tau_n|_{{(a_{j}, a_{j-1}]}}$ is continuous and convex, $j= 1, 2, \cdots, n+1;$  \\  
\item   $\tau_n(a_{j})=0, \tau_n^\prime(a_{j})>0,  j=1, \cdots, n+1;$\\
\item   $\tau_n (0)=0, \tau_n^\prime (0)=\frac{1}{a_n}>1.$ \\

\item   $\sum_{j=1}^{n+1} \frac{1}{\tau^\prime (a_{j})}<\infty.$ \\
\end{enumerate}
Let  $ f\in L^1(I), f\ge 0.$  Then, the F-P operator $P_{\tau_n}$ is defined as 
\begin{equation}\label{F-P tau_n}
P_{\tau_n}f(x)=\frac{f(\tau_{a}^{-1}(x))}{1/a_n}+\sum_{j=1}^n\frac{f(\tau_{n_j}^{-1}(x))}{\tau_n^\prime(\tau_{n_j}^{-1}(x))}\chi_{\tau_n(a_{{j}}, a_{{j-1}}]} (x),
\end{equation}
where $\tau_a= x/a_n $ on the interval $[0,a_n)$.
\begin{lemma}\label{Lem1}                  
Let $ f\in L^1(I), f\ge 0, f $ be a non-increasing function. Then 
\begin{enumerate}
\item $P_{\tau_n}(f)\in L^1(I) $ for each $n=1, 2, \dots.$
\item $P_{\tau_n}(f)\ge 0$  for each $ n=1, 2, \dots.$
\item $P_{\tau_n}(f)$ is non-increasing for each $ n=1, 2, \dots.$
\item $\parallel P_{\tau_n}(f)\parallel_\infty\le C_n \parallel f\parallel_\infty \le \left(1+C \right) \parallel f\parallel_\infty$ for each $ n=1, 2, \dots$ where $C_n=\left(a_n+\sum_{j=1}^{n} \frac{1}{\tau_n^\prime (a_{{j}})} \right).$
\end{enumerate}
\end{lemma}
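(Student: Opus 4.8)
\textbf{Proof proposal for Lemma \ref{Lem1}.}

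The plan is to verify the four claims in order, each reducing to a property already established for maps in $\mathcal{T}_{pc}^{\infty,0}(I)$ (Lemma \ref{Lem6}), since every $\tau_n$ is, by the displayed list of conditions immediately preceding the statement, a \emph{finite-branch} piecewise convex map of essentially the same type, with the single linear branch $x\mapsto x/a_n$ on $[0,a_n)$ replacing the countable tail. First I would write out $P_{\tau_n}f$ using the representation \eqref{F-P tau_n}. Claim (1) then follows because $P_{\tau_n}$ maps $L^1$ to $L^1$ (it is the Frobenius--Perron operator of a nonsingular map, hence a contraction on $L^1$ by property (iii) of the introduction, so $\|P_{\tau_n}f\|_1=\|f\|_1<\infty$). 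Claim (2) is immediate from positivity of the summands in \eqref{F-P tau_n} together with $f\ge 0$.

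For claim (3), the non-increasing property, I would argue exactly as in the finite-branch convex case (the argument underlying Lemma \ref{Lem6}(3), ultimately \cite{LY1}): each inverse branch $\tau_{n_j}^{-1}$ maps its image interval $\tau_n(a_j,a_{j-1}]$ onto $(a_j,a_{j-1}]$ with $\tau_{n_j}^{-1}(0)=a_{j-1}$ (the right endpoint), and convexity of $\tau_{n_j}$ together with $\tau_n(a_j)=0$ forces $x\mapsto f(\tau_{n_j}^{-1}(x))/\tau_n'(\tau_{n_j}^{-1}(x))$ to be non-increasing on that image interval when $f$ is non-increasing; the linear leading branch contributes $a_n f(a_n x)$ on $[0,1]$, which is trivially non-increasing. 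One must also check that the pieces ``stack'' correctly so that the sum over all branches is non-increasing on all of $I$ — this is the one place that uses that the images $\tau_n(a_j,a_{j-1}]$ are nested intervals of the form $(0,\tau_n(a_{j-1}^-)]$ (convexity plus $\tau_n(a_j)=0$), so that near $0$ one adds the most terms. This nesting/stacking verification is the main obstacle, though it is already handled in \cite{GIR, LY1} and only needs to be transcribed with the finite index range and the extra linear term.

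For claim (4), the supremum bound, I would estimate \eqref{F-P tau_n} pointwise. The leading term is bounded by $a_n\|f\|_\infty$. For each $j=1,\dots,n$, convexity of $\tau_{n_j}$ gives $\tau_n'(\tau_{n_j}^{-1}(x))\ge \tau_n'(a_j)$ (the slope is smallest at the left endpoint $a_j$ where the branch value is $0$), so $f(\tau_{n_j}^{-1}(x))/\tau_n'(\tau_{n_j}^{-1}(x))\le \|f\|_\infty/\tau_n'(a_j)$; summing over the (at most) $n$ active branches at any given $x$ yields
$$\|P_{\tau_n}f\|_\infty\le\Bigl(a_n+\sum_{j=1}^n\frac1{\tau_n'(a_j)}\Bigr)\|f\|_\infty=C_n\|f\|_\infty.$$
Finally, since $\tau_n\equiv\tau$ on $[a_n,1]$ we have $\tau_n'(a_j)=\tau'(a_j)$ for $j=1,\dots,n$, so $\sum_{j=1}^n 1/\tau_n'(a_j)\le\sum_{j=1}^\infty 1/\tau'(a_j)=C$ by \eqref{new constantsCD}, and $a_n\le a_0=1$; hence $C_n\le 1+C$, giving the stated uniform bound. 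I would close by remarking that the proofs are routine adaptations of Lemma \ref{Lem6} and that the only genuinely new feature, the extra linear branch, is handled trivially in every step.
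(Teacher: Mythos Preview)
Your proposal is correct and follows essentially the same approach as the paper's (very terse) proof: parts (1)--(3) are reduced to general properties of the F--P operator together with the convexity argument from \cite{LY1,GIR}, and part (4) is exactly the pointwise estimate using $\tau_n'(\tau_{n_j}^{-1}(x))\ge\tau_n'(a_j)$ followed by $\sum_{j=1}^n 1/\tau'(a_j)\le C$ and $a_n\le 1$. One small slip: since $\tau_n(a_j)=0$, the inverse branch sends $0$ to the \emph{left} endpoint $a_j$, not to $a_{j-1}$; this does not affect the argument (and the ``nesting'' remark is unnecessary---each summand times its characteristic function is already non-increasing on all of $I$, so the sum is too).
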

\begin{proof}
\begin{enumerate}
\item Holds for the F-P operator of any map.
\item  
$P_{\tau_n}f(x)$ is shown in formula (\ref{F-P tau_n}) and each summand is clearly non-negative.

\item  It can be easily shown that each summand in formula  (\ref{F-P tau_n})    is non-increasing.
\item We have

\begin{eqnarray*}
P_{\tau_n}f(x)&=&\frac{f(\tau_{a}^{-1}(x))}{1/a_n}+\sum_{j=1}^n\frac{f(\tau_{n_j}^{-1}(x))}{\tau_n^\prime(\tau_{n_j}^{-1}(x))}\chi_{\tau_n(a_{{j}}, a_{{j-1}}]} (x)\\
&\le& a_n \parallel f\parallel_\infty+ \sum_{j=1}^{n}\frac{\parallel f\parallel_\infty}{\tau_n^\prime(\tau_{n_j}^{-1}(x))}\le  a_n \parallel f\parallel_\infty+\sum_{j=1}^{n}\frac{\parallel f\parallel_\infty}{\tau_n^\prime(a_{{j}})}\nonumber\\
&=& \left(a_n+\sum_{j=1}^{n} \frac{1}{\tau_n^\prime (a_{{j}})} \right)\parallel f\parallel_\infty
\le \left(1+C\right)\parallel f \parallel_{\infty}
\end{eqnarray*}
\end{enumerate}

\end{proof}

\begin{lemma}\label{Lem3} If  $f: [0, 1] \to \mathbb{R}^+$ is non-increasing,   then  for each $n\ge N$,
  \begin{equation}\label{unif est0}
  \parallel P_{\tau_n}(f)\parallel_\infty\le (a_n+D_1)\parallel f\parallel_\infty +D\parallel f\parallel_1, 
  \end{equation}

\end{lemma}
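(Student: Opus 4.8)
The plan is to mimic the proof of Lemma~\ref{Lem7} (the Lasota--Yorke type inequality for maps in $\mathcal{T}_{pc}^{\infty,0}(I)$) but applied to the finite-branch map $\tau_n$. First I would split the Frobenius--Perron operator $P_{\tau_n}f$ according to formula (\ref{F-P tau_n}) into the contribution of the linearized branch $\tau_a(x)=x/a_n$ on $[0,a_n)$ and the sum over the genuine branches $\tau_{n_j}$ on $(a_j,a_{j-1}]$, $j=1,\dots,n$. The first term is bounded exactly as in the proof of part (4) of Lemma~\ref{Lem1}: since $f$ is non-increasing and the inverse branch of $x/a_n$ is $a_n x$, we have $f(\tau_a^{-1}(x))/(1/a_n)=a_n f(a_n x)\le a_n\|f\|_\infty$, giving the $a_n\|f\|_\infty$ term. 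The real work is the finite sum.

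For the finite sum, I would follow the same device used in Lemma~\ref{Lem7}/Lemma~\ref{Le4}: on each branch $\tau_{n_j}=\tau|_{(a_j,a_{j-1}]}$ (which coincides with a branch of $\tau$ for $j\le n$), convexity together with $\tau(a_j)=0$ forces $\tau'(\tau_{n_j}^{-1}(x))\ge \tau'(a_j)$ wherever the branch is defined, so each summand is at most $f(\tau_{n_j}^{-1}(x))/\tau'(a_j)$. Then one uses the non-increasing property and Proposition~2 (the bound $f(y)\le \frac{1}{y}\lambda(f)$) evaluated at the left endpoint $y=a_j$ to dominate $f(\tau_{n_j}^{-1}(x))$ by $\min\{\|f\|_\infty,\ \frac{1}{a_j}\|f\|_1\}$, and actually one splits: for the branches that accumulate at $0$ (indices $j$ with $a_j$ small, i.e. $j>N$) one uses the $\|f\|_\infty$ bound, collecting the factor $\sum_{j=N+1}^{?}\frac{1}{\tau'(a_j)}\le D_1$; for the finitely many branches $j\le N$ one uses the $\frac{1}{a_j}\|f\|_1$ bound, collecting $\sum_{j=1}^N\frac{1}{a_j\tau'(a_j)}=D$. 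Adding the $a_n\|f\|_\infty$ from the linear branch yields precisely $\|P_{\tau_n}f\|_\infty\le (a_n+D_1)\|f\|_\infty+D\|f\|_1$.

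One technical point I would be careful about is the book-keeping of indices: for $n\ge N$ the partition of $\tau_n$ has branches at $a_1,\dots,a_n$ plus the linear piece, and among these $a_{N+1},\dots,a_n$ are the "small" partition points whose reciprocal derivatives sum to at most $D_1$ by (3') (since $\sum_{m=N+1}^\infty \frac{1}{\tau'(a_m)}=D_1<1$, any partial sum is $\le D_1$), while $a_1,\dots,a_N$ are the "large" ones handled via $\lambda(f)$. I should also note that the relevant quantity is really the left endpoint of each inverse-branch image interval being bounded below by $a_j$, which is where convexity of the branch and the condition $\tau(a_j)=0$ enter — this is the same estimate established in Section~2 and I would simply cite it rather than redo it. The main obstacle, such as it is, is purely organizational: making sure the splitting at index $N$ is consistent with the renaming of partition points introduced just before (\ref{seqapprox}) and that the constants $D_1$ and $D$ are exactly those in (\ref{new constantsCD}); the analytic content is a routine repetition of the earlier Lasota--Yorke argument with the extra harmless term $a_n\|f\|_\infty$ coming from the artificially inserted expanding linear branch.

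\begin{proof}
Using (\ref{F-P tau_n}) and the fact that $f$ is non-increasing, the linear branch contributes
$$\frac{f(\tau_a^{-1}(x))}{1/a_n}=a_n f(a_n x)\le a_n\|f\|_\infty .$$
For each $j=1,\dots,n$, the branch $\tau_{n_j}$ is a convex branch of $\tau$ with $\tau(a_j)=0$, hence $\tau_n'(\tau_{n_j}^{-1}(x))\ge \tau'(a_j)$ and $\tau_{n_j}^{-1}(x)\ge a_j$ for $x$ in the image of this branch, so that
$$\frac{f(\tau_{n_j}^{-1}(x))}{\tau_n'(\tau_{n_j}^{-1}(x))}\le \frac{1}{\tau'(a_j)}\,f(\tau_{n_j}^{-1}(x))\le \frac{1}{\tau'(a_j)}\min\Bigl\{\|f\|_\infty,\ \tfrac{1}{a_j}\|f\|_1\Bigr\},$$
where the last inequality uses Proposition~2. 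Estimating the summands with $j\le N$ by $\frac{1}{a_j\tau'(a_j)}\|f\|_1$ and the summands with $N<j\le n$ by $\frac{1}{\tau'(a_j)}\|f\|_\infty$, and recalling (\ref{new constantsD1}) and (\ref{new constantsCD}), we obtain
$$\sum_{j=1}^n\frac{f(\tau_{n_j}^{-1}(x))}{\tau_n'(\tau_{n_j}^{-1}(x))}\chi_{\tau_n(a_j,a_{j-1}]}(x)\le \Bigl(\sum_{j=N+1}^n\frac{1}{\tau'(a_j)}\Bigr)\|f\|_\infty+\Bigl(\sum_{j=1}^N\frac{1}{a_j\tau'(a_j)}\Bigr)\|f\|_1\le D_1\|f\|_\infty+D\|f\|_1 .$$
Adding the contribution of the linear branch gives $\|P_{\tau_n}(f)\|_\infty\le (a_n+D_1)\|f\|_\infty+D\|f\|_1$, as claimed.
\end{proof}
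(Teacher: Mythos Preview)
Your proof is correct and follows essentially the same approach as the paper's: split the Frobenius--Perron image into the linear-branch contribution (giving $a_n\|f\|_\infty$) and the finite sum over $j=1,\dots,n$, then split that sum at index $N$, bounding the $j>N$ summands by $\|f\|_\infty/\tau'(a_j)$ to collect $D_1$ and the $j\le N$ summands via Proposition~2 to collect $D$. The only cosmetic difference is that the paper first observes $\|P_{\tau_n}f\|_\infty=P_{\tau_n}f(0)$ (by Lemma~\ref{Lem1}) and evaluates everything at $x=0$, whereas you establish the same bound pointwise; the estimates themselves are identical.
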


\begin{proof}
Since $f$ is non-increasing, $f(0)\ge \parallel f\parallel_\infty,$ and by Lemma \ref{Lem1},
$P_{\tau_n}f(0)= \parallel P_{\tau_n}f\parallel_\infty.$ 
Now, 
\begin{eqnarray*}
P_{\tau_n}f(0)&=&\frac{f(\tau_{a}^{-1}(x))}{1/a_n}+\sum_{j=1}^{n}\frac{f(\tau_{n_j}^{-1}(0))}{\tau_n^\prime(\tau_{n_j}^{-1}(0))}\chi_{\tau_n(a_{{j}}, a_{{j-1}}]} (0)\nonumber\\
&=&\frac{1}{\tau_n^\prime(0)}f(0)+\sum_{j=1}^N\frac{f(\tau_{n_j}^{-1}(0))}{\tau_n^\prime(\tau_{n_j}^{-1}(0))}
+\sum_{j=N+1}^n\frac{f(\tau_{n_j}^{-1}(0))}{\tau_n^\prime(\tau_{n_j}^{-1}(0))}\\
&\le&a_nf(0)+\sum_{j=1}^N\frac{f(a_{n_{j}})}{\tau_n^\prime(a_{n_{j}})}+
\sum_{j=N+1}^n\frac{f(a_{n_{j}})}{\tau_n^\prime(a_{n_{j}})}\nonumber\\
&\le& ( a_n+ D_1)f(0)+\sum_{j=1}^N \frac{\lambda(f)}{a_{{j}}}\frac{1}{\tau_n^\prime(a_{{j}})}\nonumber\\
&\le&( a_n+ D_1)\parallel f\parallel_\infty +D\parallel f\parallel_1.
\end{eqnarray*}
\end{proof}

\begin{theorem}\label{Thm3}
For each $n\in \mathbb{N},$ $\tau_n$ admits an ACIM $\mu_n=f_n^*\cdot \lambda$ with non-increasing density function  $f_n^*.$
\end{theorem}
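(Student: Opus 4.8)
The plan is to produce $f_n^*$ as a fixed point of $P_{\tau_n}$ inside a convex, $L^1$-compact family of non-increasing densities, using Lemmas~\ref{Lem1} and \ref{Lem3} in place of the usual Lasota--Yorke inequality. Fix $n\ge N$; after possibly enlarging $N$ (legitimate, since $a_n\to 0$ while enlarging $N$ only decreases $D_1$) we may assume $a_n+D_1<1$, so that $M_n:=\max\{1,\,D/(1-a_n-D_1)\}$ is a finite constant with $M_n\ge 1$. Consider
\[
  \mathcal D_n:=\bigl\{\,f\colon I\to[0,\infty)\ \text{non-increasing},\ \int_I f\,d\lambda=1,\ \|f\|_\infty\le M_n\,\bigr\}.
\]
This set is nonempty (it contains the constant density $g_0\equiv 1$, since $M_n\ge 1$), convex, and uniformly bounded.

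Next I would verify that $P_{\tau_n}(\mathcal D_n)\subseteq\mathcal D_n$. Let $f\in\mathcal D_n$. By Lemma~\ref{Lem1}(2),(3) the function $P_{\tau_n}f$ is non-negative and non-increasing; by the integral-preserving property (iv) of the F--P operator, $\int_I P_{\tau_n}f\,d\lambda=\int_I f\,d\lambda=1$; and since $f\ge 0$ forces $\|f\|_1=1$, Lemma~\ref{Lem3} gives
\[
  \|P_{\tau_n}f\|_\infty\ \le\ (a_n+D_1)\|f\|_\infty+D\|f\|_1\ \le\ (a_n+D_1)M_n+D\ \le\ M_n
\]
by the choice of $M_n$. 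Hence $P_{\tau_n}f\in\mathcal D_n$. Moreover $P_{\tau_n}$ is continuous on $\mathcal D_n$, being an $L^1$-contraction by property (iii).

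Then I would note that $\mathcal D_n$ is compact in $L^1(I)$: given a sequence in $\mathcal D_n$, Helly's selection theorem produces a subsequence converging pointwise to a non-increasing function $f$ with $0\le f\le M_n$; dominated convergence upgrades this to $L^1$-convergence and shows $\int_I f\,d\lambda=1$, so $f\in\mathcal D_n$. Since $\mathcal D_n$ is convex and $L^1$-compact and $P_{\tau_n}\colon\mathcal D_n\to\mathcal D_n$ is continuous, Schauder's fixed point theorem yields $f_n^*\in\mathcal D_n$ with $P_{\tau_n}f_n^*=f_n^*$. (One can avoid Schauder by a mean-ergodic argument: the Ces\`aro averages $A_m:=\frac1m\sum_{j=0}^{m-1}P_{\tau_n}^j g_0$ lie in $\mathcal D_n$ by convexity, a subsequence converges in $L^1$ to some $f_n^*\in\mathcal D_n$, and since $\|P_{\tau_n}A_m-A_m\|_1=\frac1m\|P_{\tau_n}^m g_0-g_0\|_1\le\frac2m\to 0$, continuity of $P_{\tau_n}$ forces $P_{\tau_n}f_n^*=f_n^*$.) By property (vii) of the F--P operator, $\mu_n:=f_n^*\cdot\lambda$ is $\tau_n$-invariant; it is absolutely continuous with respect to $\lambda$ by construction, and its density $f_n^*$ is non-increasing, which is exactly the assertion.

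The one point deserving care is the bookkeeping that makes $\mathcal D_n$ genuinely $P_{\tau_n}$-invariant, namely choosing $M_n$ so that the Lasota--Yorke estimate of Lemma~\ref{Lem3} absorbs itself against the normalization $\|f\|_1=1$; this is exactly where the hypothesis $a_n+D_1<1$ (equivalently, $\tau_n'(0)=1/a_n$ large enough) enters, and it is why the argument runs cleanest for $n\ge N$, which is all that is needed in the sequel. The compactness of $\mathcal D_n$ via Helly and the passage to the limit in the Ces\`aro averages are otherwise routine.
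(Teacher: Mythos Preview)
Your argument is correct and follows essentially the same route as the paper: use Lemma~\ref{Lem1} to keep iterates non-increasing, Lemma~\ref{Lem3} to bound sup-norms, and then extract a fixed point from the resulting bounded family of densities. The only cosmetic difference is that the paper iterates the inequality and invokes the Yosida--Kakutani mean ergodic theorem on the weakly compact orbit, whereas you package the same data into an invariant compact convex set and apply Schauder (or, in your parenthetical alternative, the same Ces\`aro argument via Helly); both proofs explicitly restrict to $n$ large enough that $a_n+D_1<1$ and defer smaller $n$ to the classical finite-branch theory of Subsection~2.1.
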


\begin{proof}
Our proof works for all $n\ge n_0$, where we have $a_{n_0}+D_1<1$ and gives a uniform estimate. For the $n<n_0$ the claim follows from Subsection 2.1.

Let $f\in L^1$ be non-negative and non-increasing. Consider the sequence $\{P_{\tau_n}^k f\}_{k=0}^\infty.$ Since every $P_{\tau_n}^k f$ is non-negative and non-increasing, by part (3) of Lemma \ref{Lem1} we can apply Lemma \ref{Lem3} iteratively and obtain 
\begin{eqnarray*}
&&\parallel P_{\tau_n}^k f\parallel_\infty
\le(a_{n_0}+D_1)^k  \parallel  f\parallel_\infty+\frac{D}{1-(a_{n_0}+D_1)} \parallel  f\parallel_1.
\end{eqnarray*}
So  the sequence $\{P_{\tau_n}^kf\}_{k=0}^\infty$ is uniformly bounded and weakly compact. By Yosida-Kakutani Theorem \cite{YK}, $\frac 1k\sum_{j=1}^k P_{\tau_n}^jf$
converges in $L^1$ to a $P_{\tau_n}$ invariant function $f_n^*.$ It is non-increasing since it is the limit of non-increasing functions.
\end{proof}
\noindent 

\begin{definition}
Let $\tau$  and  $\tau_n$, $n=1,2,\dots$  be maps  from  $I$ into itself. We say that $\tau_n$ converges to $\tau$ almost uniformly if for any $\epsilon>0,$ there exists a measurable set $A_{\epsilon}\subset [0,1], \lambda(A_\epsilon)>1-\epsilon, $  such that $\tau_n\to \tau$ uniformly on $A_\epsilon.$
\end{definition}
\begin{lemma}\label{Lem4} 
Let $\tau$ be a piecewise convex map with countably many branches and the sequence $\tau_n$, $n=1,2,\dots$ be defined by formula (\ref {seqapprox}). Then,
$\tau_n$ converges to $\tau$ almost uniformly.
\end{lemma}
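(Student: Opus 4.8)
The plan is to exploit the fact that, by construction, $\tau_n$ differs from $\tau$ only on the shrinking interval $[0,a_n)$, and that these intervals collapse down to the single point $0$. Accordingly, for the set $A_\epsilon$ appearing in the definition of almost uniform convergence I would simply take a closed subinterval of $I$ that is bounded away from $0$, and the convergence on $A_\epsilon$ will in fact be eventual exact equality, not merely uniform approximation.

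Concretely, given $\epsilon>0$, I would set $\delta=\epsilon/2$ and $A_\epsilon=[\delta,1]$, so that $A_\epsilon$ is measurable and $\lambda(A_\epsilon)=1-\delta=1-\epsilon/2>1-\epsilon$. Since $\{a_n\}$ is decreasing with $\lim_{n\to\infty}a_n=0$, there is an index $N_\epsilon$ with $a_n<\delta$ for every $n\ge N_\epsilon$. For such $n$ and every $x\in A_\epsilon$ one has $x\ge\delta>a_n$, hence by formula (\ref{seqapprox}), $\tau_n(x)=\tau(x)$. Therefore
\[
\sup_{x\in A_\epsilon}\lvert\tau_n(x)-\tau(x)\rvert=0\qquad\text{for all }n\ge N_\epsilon,
\]
which in particular shows that $\tau_n\to\tau$ uniformly on $A_\epsilon$. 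As $\epsilon>0$ was arbitrary, this proves that $\tau_n$ converges to $\tau$ almost uniformly.

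Honestly, there is essentially no obstacle here: the argument is a direct consequence of $a_n\downarrow 0$ together with the explicit two-piece definition of $\tau_n$. The only points deserving a word of care are to choose $\delta$ strictly smaller than $\epsilon$ so that the inequality $\lambda(A_\epsilon)>1-\epsilon$ is strict, and to note that the restriction $n\ge N$ under which (\ref{seqapprox}) was introduced is immaterial, since almost uniform convergence only constrains the tail of the sequence $\{\tau_n\}$. Finally, I would remark that the same reasoning applies verbatim to a map $\tau\in\mathcal{T}^\infty_{pc}(I)$ whose limit points of partition points are separated from $0$: one takes $A_\epsilon$ to be the complement in $I$ of a union of $\delta$-neighborhoods of the (at most countably many) limit points, with the $k$-th neighborhood of radius $\epsilon/2^{k+2}$, so that $\lambda(A_\epsilon)>1-\epsilon$ while the analogue of $\tau_n$ coincides with $\tau$ on $A_\epsilon$ for all sufficiently large $n$.
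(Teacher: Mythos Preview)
Your proof is correct and follows essentially the same idea as the paper's own argument: both exploit that $\tau_n=\tau$ on $[a_n,1]$ and that $a_n\downarrow 0$, choosing $A_\epsilon$ to be an interval bounded away from $0$ on which the tail of $\{\tau_n\}$ coincides with $\tau$. Your version is in fact slightly more explicit than the paper's, which fixes $A_\epsilon=(a_n,1)$ for a single chosen $n$ with $a_n<\epsilon$ and leaves the uniform convergence of the whole tail on that set implicit.
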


\begin{proof}
Let $\epsilon>0.$ Choose the  decreasing partition $\{1=a_0, a_1, a_2, \cdots, a_n, a_{n+1}= 0\}$ of $[0,1]$ for $\tau_n$ such that $a_n<\epsilon.$  Let $A_\epsilon=(a_n,1).$  Then, $\lambda(A_\epsilon)=1-a_n>1-\epsilon $  and we have  $\tau_n= \tau$  on $A_\epsilon.$\\
\end{proof}

\subsection{Ulam's method} \label{Sec5}   
In this section, first, we describe Ulam's method for a finite dimensional approximation $P_{n, k}$ of the Perron-Frobenius operator $P_{\tau_n}$ of $\tau_n.$ Ulam's method computes  $f_{n,k}$ on a partition  of $k$ subintervals of the state space   as  an approximation of the actual   stationary   density function  $f_n$  of  $\tau_n, ~n\ge 1.$  Moreover,  we show that $f_{n, k}$  converges to $f_n$ as $k\to \infty.$ We  follow \cite{TYL}, \cite{M} and \cite{D}. In general, the Ulam's method is widely studied and used, see for example
\cite{BM}, \cite{BS},   \cite{DZ}, \cite{F}, \cite{GB} and the references within.

Let $\tau_n$ be an approximation of $\tau\in \mathcal{T}_{pc}^\infty(I).$
 Then, by the Theorem \ref{Thm3},  $\tau_n$ has an ACIM $\mu_n$ with a stationary density function $f_n.$  Now, we  describe Ulam's method for approximating $f_n.$  Let $k$ be a positive integer. Let $\mathcal{P}^{(k)} = \{J_1, J_2,\dots ,J_k\}$ be a partition of the
interval $[0, 1]$ into $k$ equal subintervals. Now, construct  the matrix
$$\mathbb{M}^{(k)}_{n}=\left(\frac{\lambda\left(\tau_{n}^{-1}(J_j)\cap J_i\right)}{\lambda(J_i)}\right)_{1\le i,j\le k}.$$ Let $L^{(k)}\subset   L^1([0, 1],\lambda)$ be a subspace of $L^1$ consisting of functions which are constant on
elements of the partition $\mathcal{P}^{(k)}.$ We will represent functions in $L^{(k)}$ as vectors: vector $f =
[f_1,f_2,\dots ,f_k]$ corresponds to the function 
$f =\sum _{i=1}^kf_i\chi_{J_i}.$ Let $Q^{(k)}$ be the 
isometric projection of $L^1$ onto $L^{(k)}$:

\begin{equation}\label{Q}
   Q^{(k)} (f)=\sum_{i=1}^k\left(\frac{1}{\lambda(J_i)}
\int_{J_i}f d\lambda \right)\chi_{J_i}=\left[\frac{1}{\lambda(J_1)}\int_{J_1}f d\lambda,\dots, 
\frac{1}{\lambda(J_k)}\int_{J_k}f d\lambda \right] 
\end{equation}

 Let $f = [f_1, f_2, \dots ,f_k] \in L^{(k)}.$ We define the operator $P_{\tau_n}^{(k)}: L^{(k)}\to L^{(k)}$ by 
\begin{eqnarray}\label{Approx_FP}P_{\tau_n}^{(k)} f
&=&  \left[f_1, f_2, \dots, f_k\right]\cdot \left(\mathbb{M}^{(k)}_{n}\right)^{\text{T}},
\end{eqnarray}
which is a finite dimensional approximation to the operator $P_{\tau_n}.$  $A^{\text{T}}$ denotes the transpose of the matrix $A.$


The following Lemma will be used several times in the sequel.

\begin{lemma}\label{convL1}
Let $\{g_n\}_{n=1,2\dots}$ be a sequence of non-increasing functions uniformly bounded in $L^\infty$. 
If $g_n\to h$, as $n\to\infty$, weakly in $L^1$, then  $g_n\to h$, as $n\to\infty$,  in $L^1$ and  almost everywhere (a. e.).
\end{lemma}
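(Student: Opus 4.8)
The plan is to exploit the two classical facts about sequences of non-increasing functions on $[0,1]$: (a) a uniformly $L^\infty$-bounded family of non-increasing functions is precompact in $L^1$ (a Helly-type / Dini-type argument), and (b) weak $L^1$ limits are unique, so any $L^1$-convergent subsequence must converge to $h$. First I would record that each $g_n$ is non-increasing and $\|g_n\|_\infty\le M$ for some $M$ independent of $n$; in particular $\|g_n\|_1\le M$. Since the $g_n$ are monotone, their pointwise variation on $[0,1]$ is bounded by $g_n(0^+)-g_n(1^-)\le M$, so $\{g_n\}$ is a sequence of uniformly bounded functions of uniformly bounded variation. By Helly's selection theorem, every subsequence of $\{g_n\}$ has a further subsequence converging pointwise (everywhere, say, after also controlling the countably many possible jump locations) to some non-increasing function $\tilde h$ with $\|\tilde h\|_\infty\le M$. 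By the dominated convergence theorem (dominating function the constant $M$ on the finite-measure space $[0,1]$), this pointwise convergence upgrades to convergence in $L^1$, hence also weakly in $L^1$.

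Next I would invoke uniqueness of weak limits: the original hypothesis says $g_n\to h$ weakly in $L^1$, so every subsequential weak limit equals $h$; combined with the previous paragraph, every subsequence of $\{g_n\}$ has a further subsequence converging to $h$ in $L^1$. A standard subsequence argument then gives $g_n\to h$ in $L^1$ for the full sequence: if not, there would be $\varepsilon>0$ and a subsequence with $\|g_{n_j}-h\|_1\ge\varepsilon$ for all $j$, but that subsequence has a further subsequence converging to $h$ in $L^1$, a contradiction. For the almost-everywhere statement, once $g_n\to h$ in $L^1$ there is a subsequence converging a.e.\ to $h$; but I want the full sequence to converge a.e. For this I would again use monotonicity: pass to the pointwise-a.e.\ limit along a subsequence to identify $h$ (a.e.) with a non-increasing function, then note that monotone functions converging in $L^1$ must converge at every point of continuity of the limit, which is all but countably many points, hence a.e.

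I expect the main obstacle to be the bookkeeping in the Helly-selection step: a non-increasing function can have jumps, so pointwise convergence on a dense set must be promoted to convergence at every continuity point of the limit, and one must argue that the limit extracted is genuinely non-increasing and bounded by $M$. This is routine real analysis but needs care; once it is in place, the dominated convergence upgrade and the subsequence-of-subsequence argument for the full-sequence conclusion are immediate. An alternative, slightly slicker route avoids Helly entirely: note that non-increasing functions of bounded $L^\infty$ norm form a weakly compact, and in fact strongly closed convex bounded set on which the weak and strong $L^1$ topologies have the same convergent sequences (because the map is continuous from the weak topology restricted to this set — a consequence of the Dunford--Pettis criterion together with monotonicity giving equi-integrability for free on $[0,1]$ since the functions are bounded); then weak convergence to $h$ forces strong convergence, and a final subsequence argument gives the a.e.\ statement. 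I would present whichever version is shortest, most likely the Helly-based one since it is elementary and self-contained.
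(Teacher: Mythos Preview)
Your proposal is correct and follows essentially the same route as the paper: uniform $L^\infty$ boundedness plus monotonicity gives uniform bounded variation, Helly's selection theorem extracts pointwise-convergent subsequences, dominated convergence upgrades this, and uniqueness of weak limits together with a subsequence-of-subsequence argument pins the limit to $h$. The only cosmetic difference is the order---the paper establishes a.e.\ convergence first and then $L^1$ via DCT, while you do $L^1$ first and then argue a.e.\ separately using continuity points of the monotone limit; your handling of the a.e.\ step is in fact more careful than the paper's one-line ``considering all possible subsequences.''
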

\begin{proof}
Since $g_n$'s are non-increasing and uniformly bounded in $L^\infty$, they are also of uniformly bounded variation.
By Helly's Theorem \cite{Rudin} there is a   subsequence $g_{n_k}$ convergent a.e. to some function $h_1$. Since $g_{n_k}\to h_1$ weakly in $L^1$ we have $h_1=h$.
Considering all possible subsequences we prove that $g_n\to h$ a.e. Since $g_n$ converge to $h$ a.e. and they are uniformly bounded in $L^\infty$, the convergence is also in $L^1$
(e.g. by Lebesgue Dominated Convergence theorem).
\end{proof}

Since each map $\tau_n$ is exact \cite{LY1}, by Proposition 1.2 of \cite{HM} we obtain that the invariant densities $f_{n,k}$ of $P_{\tau_n}^{(k)}$ are unique.

\begin{lemma}\label{decreasing}
The invariant density  $f_{n,k}$ of $P_{\tau_n}^{(k)}$ is non-increasing for any $n,k >1$.
\end{lemma}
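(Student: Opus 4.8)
The plan is to show that $P_{\tau_n}^{(k)}$ maps the cone of non-increasing vectors in $L^{(k)}$ into itself; since $f_{n,k}$ is the (unique) fixed point of this operator, it must then lie in the closure of that cone, i.e.\ be non-increasing. Concretely, I would first record that a vector $f=[f_1,\dots,f_k]$ represents a non-increasing step function precisely when $f_1\ge f_2\ge\cdots\ge f_k$. The whole statement therefore reduces to the claim that if $f_1\ge\cdots\ge f_k\ge 0$ then the components of $P_{\tau_n}^{(k)}f = f\cdot(\mathbb{M}^{(k)}_n)^{\mathrm{T}}$ are again in non-increasing order.

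The key step is to connect the finite-dimensional operator to the true Frobenius--Perron operator $P_{\tau_n}$, whose monotonicity-preserving property we already have from Lemma~\ref{Lem1}(3). Observe that, by definition of the Ulam matrix, for $f\in L^{(k)}$ we have the identity
\[
P_{\tau_n}^{(k)} f \;=\; Q^{(k)}\!\left(P_{\tau_n} f\right),
\]
where $Q^{(k)}$ is the conditional-expectation projection from \eqref{Q}: indeed the $j$-th component of $f\cdot(\mathbb{M}^{(k)}_n)^{\mathrm{T}}$ is $\sum_i f_i\,\lambda(\tau_n^{-1}(J_j)\cap J_i)/\lambda(J_i)$, which by the change-of-variables/duality property \eqref{FP-Operator} of $P_{\tau_n}$ equals $\frac{1}{\lambda(J_j)}\int_{J_j}P_{\tau_n}f\,d\lambda$ when $f=\sum_i f_i\chi_{J_i}$ and all $J_i$ have equal length. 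Now take $f$ non-increasing; then $P_{\tau_n}f$ is non-increasing by Lemma~\ref{Lem1}(3), and the averages $\frac{1}{\lambda(J_j)}\int_{J_j}P_{\tau_n}f\,d\lambda$ of a non-increasing function over the consecutive equal intervals $J_1,\dots,J_k$ form a non-increasing sequence. Hence $Q^{(k)}(P_{\tau_n}f)=P_{\tau_n}^{(k)}f$ is non-increasing, so the cone of non-increasing vectors is invariant.

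Finally, I would invoke the existence and uniqueness of the invariant density $f_{n,k}$ of $P_{\tau_n}^{(k)}$ (established just above via exactness of $\tau_n$ and Proposition~1.2 of \cite{HM}). Starting from any non-negative non-increasing $f^{(0)}\in L^{(k)}$ with $\int f^{(0)}\,d\lambda=1$ — e.g.\ $f^{(0)}=\chi_{J_1}/\lambda(J_1)$, or simply the constant function $\mathbf 1$ — the Cesàro averages $\frac1N\sum_{m=1}^N (P_{\tau_n}^{(k)})^m f^{(0)}$ stay in the (closed, finite-dimensional) cone of non-increasing vectors and converge to a fixed point, which by uniqueness is $f_{n,k}$; therefore $f_{n,k}$ is non-increasing. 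The main obstacle is just the bookkeeping in verifying the identity $P_{\tau_n}^{(k)}=Q^{(k)}\circ P_{\tau_n}$ on $L^{(k)}$ — in particular making sure the equal-length hypothesis on the $J_i$ is used so that $Q^{(k)}$ really is the $L^1$-isometric averaging projection appearing in \eqref{Q} — after which the monotonicity claims are immediate. One should also note the convention that the first branch of $\tau_n$ on $[0,a_n)$ contributes its mass to $J_1$ (the interval containing $0$), so no boundary ambiguity disturbs the ordering.
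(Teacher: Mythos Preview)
Your proof is correct and follows essentially the same route as the paper's: both arguments use the factorization $P_{\tau_n}^{(k)}=Q^{(k)}\circ P_{\tau_n}$ on $L^{(k)}$, observe that each factor preserves non-increasing functions, start from the constant density $\mathbf 1$, and pass to the limit along Ces\`aro means to conclude that $f_{n,k}$ is non-increasing. The only cosmetic difference is in the last step: the paper invokes the $L^\infty$ bound, Yosida--Kakutani, and Lemma~\ref{convL1} to obtain convergence, whereas you exploit finite-dimensionality of $L^{(k)}$ (closedness of the cone and convergence of Ces\`aro means of a stochastic matrix) together with the already-established uniqueness of $f_{n,k}$.
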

\begin{proof}
Let $f\in L^{(k)}$. Since $ P_{\tau_{n}}^{(k)}f=Q^{(k)}P_{\tau_{n}}  f$, and both operators  $P_{\tau_{n}} $ and $Q^{(k)}$ transform non-increasing functions into non-increasing functions,
the operators $P_{\tau_{n}}^{(k)}$ also have this property. Let $f=1$ be a constant function understood as $[1,1,\dots,1]\in L^{(k)}$. It is non-increasing. Thus, all the functions
$(P_{\tau_{n}}^{(k)})^m f$, $m=1,2,\dots$, are non-increasing. Similarly as the estimate (\ref{unif est0}) was obtained, they can be shown to be uniformly bounded in $L^\infty$ and thus weakly compact in $L^1$. Then, Yosida-Kakutani theorem \cite{YK} shows that the sequence $1/s \sum_{m=1}^s (P_{\tau_{n}}^{(k)})^m f$ converges in $L^1$ to the invariant density  $f_{n,k}$.
By Lemma \ref{convL1} the convergence is also a.e. and  $f_{n,k}$ is non-increasing.
\end{proof}

 Using Ulam's method and corresponding convergence analysis described in \cite{TYL, M, D}, the following theorem can be proved.

\begin{theorem}\label{Thm4} 
Let $\tau \in \mathcal T_{pc}^{\infty,0}(I)$ be a piecewise convex map with countably many branches.
Let $\{\tau_n\}_{n=1}^\infty$ be the approximating sequence of piecewise
convex maps with a finite number of branches where $\tau_n$  are defined  in Equation (\ref{seqapprox}) in   Subsection \ref{Sec4}. If $f_{n,k}$ is a normalized fixed point of $P_{\tau_n}^{(k)}, k=1, 2, \dots,$ defined in  (\ref{Approx_FP}), then the sequence $\{f_{n,k}\}_{k=1}^\infty$ is weakly pre-compact in $L^1.$ Any limit point $f_n^*$ of the sequence $\{f_{n, k}\}_{k=1}^\infty$  is a fixed point of $P_{\tau_n}.$
\end{theorem}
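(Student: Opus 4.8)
The plan is to run the classical Li--Ulam argument, supplying it with the Lasota--Yorke estimates and compactness facts already in place. Throughout, $n$ is fixed, and I will use repeatedly the identity $P_{\tau_n}^{(k)}f=Q^{(k)}P_{\tau_n}f$ for $f\in L^{(k)}$ (established in the proof of Lemma~\ref{decreasing}) together with the fact that the averaging projection $Q^{(k)}$ is a contraction of both $L^1$ and $L^\infty$.

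First I would establish uniform bounds. By Lemma~\ref{decreasing} each $f_{n,k}$ is non-increasing, and it is non-negative with $\|f_{n,k}\|_1=1$. Writing $f_{n,k}=Q^{(k)}\big(P_{\tau_n}f_{n,k}\big)$ and noting that $P_{\tau_n}f_{n,k}$ is non-increasing by part~(3) of Lemma~\ref{Lem1}, the Lasota--Yorke inequality~(\ref{unif est0}) combined with $\|Q^{(k)}\|_{L^\infty\to L^\infty}\le1$ gives
\[
\|f_{n,k}\|_\infty\le(a_n+D_1)\|f_{n,k}\|_\infty+D\|f_{n,k}\|_1=(a_n+D_1)\|f_{n,k}\|_\infty+D .
\]
For $n\ge n_0$ (where $a_n+D_1<1$, as in the proof of Theorem~\ref{Thm3}) this yields $\|f_{n,k}\|_\infty\le D/(1-(a_n+D_1))$, a bound independent of $k$; for the finitely many $n<n_0$ the analogous bound follows from the estimate of Subsection~2.1, whose contraction factor is $a_n<1$. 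Being non-increasing and uniformly bounded, the $f_{n,k}$ are of uniformly bounded variation and uniformly integrable, so $\{f_{n,k}\}_{k\ge1}$ is weakly precompact in $L^1$ (indeed precompact in $L^1$, via Helly's theorem and bounded convergence), which is the first assertion.

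Next I would identify the limit points. Let $f_n^*$ be a weak-$L^1$ limit point and take a subsequence $f_{n,k_j}\to f_n^*$ weakly in $L^1$; by Lemma~\ref{convL1} this convergence is in fact in $L^1$ and a.e., and $f_n^*$ is non-increasing. To see $P_{\tau_n}f_n^*=f_n^*$, I would use the decomposition
\begin{align*}
\|f_n^*-P_{\tau_n}f_n^*\|_1
&\le\|f_n^*-f_{n,k_j}\|_1+\|f_{n,k_j}-Q^{(k_j)}P_{\tau_n}f_{n,k_j}\|_1\\
&\quad{}+\|Q^{(k_j)}P_{\tau_n}(f_{n,k_j}-f_n^*)\|_1+\|Q^{(k_j)}P_{\tau_n}f_n^*-P_{\tau_n}f_n^*\|_1 .
\end{align*}
The first term tends to $0$ by the $L^1$ convergence; the second is identically $0$ because $f_{n,k_j}$ is a fixed point of $P_{\tau_n}^{(k_j)}=Q^{(k_j)}P_{\tau_n}$ on $L^{(k_j)}$; the third is at most $\|f_{n,k_j}-f_n^*\|_1\to0$ since $Q^{(k_j)}$ and $P_{\tau_n}$ are $L^1$-contractions; and the fourth tends to $0$ because $Q^{(k)}g\to g$ in $L^1$ as $k\to\infty$ for every fixed $g\in L^1$, here with $g=P_{\tau_n}f_n^*$. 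Hence $\|f_n^*-P_{\tau_n}f_n^*\|_1=0$. (Since, by Theorem~\ref{Thm3} together with exactness, $P_{\tau_n}$ has a unique non-negative normalized fixed point $f_n$, one then even gets $f_{n,k}\to f_n$.)

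The routine parts are the contractivity estimates; the points that need care are exactly the two places where the limit passes through the $k$-dependent projection $Q^{(k)}$. The first is getting the $L^\infty$ bound \emph{uniformly in $k$} --- this is why one feeds the Lasota--Yorke inequality through $Q^{(k)}P_{\tau_n}f_{n,k}$ rather than through $P_{\tau_n}^{(k)}f_{n,k}$ directly. The second is the vanishing of $\|Q^{(k_j)}P_{\tau_n}f_n^*-P_{\tau_n}f_n^*\|_1$, which I would prove by approximating $P_{\tau_n}f_n^*$ in $L^1$ by a continuous function (controlled by uniform continuity as the mesh of $\mathcal P^{(k)}$ tends to $0$) and using $\|Q^{(k)}\|_{L^1\to L^1}\le1$ on the remainder.
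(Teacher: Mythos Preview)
Your proposal is correct and follows essentially the same route as the paper: obtain the uniform $L^\infty$ bound $\|f_{n,k}\|_\infty\le D/(1-(a_n+D_1))$ by pushing the Lasota--Yorke inequality~(\ref{unif est0}) through the $L^\infty$-contraction $Q^{(k)}$, then run the Li--Miller identification of limit points. The paper records the bound and then simply writes ``the proof continues exactly as in [M]''; your four-term decomposition is precisely that argument spelled out, with the small upgrade of invoking Lemma~\ref{convL1} to promote weak convergence to $L^1$ convergence before estimating.
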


\begin{proof}
 Let $P_{\tau_{n}}^{(k)}$ be the Ulam's approximation of the F-P operator $P_{\tau_{n}}$ of $\tau_{n}.$ Let $Q^{(k)}$ be the isometric projection defined in (\ref{Q}). It can be shown that (see: (4), page 3 \cite{M}; def. (2.1), page 5 \cite{TYL}): 
 \begin{equation}\label{composition}
   P_{\tau_{n}}^{(k)}Q^{(k)}f=Q^{(k)}P_{\tau_{n}} Q^{(k)} f. 
 \end{equation}
 By the definition of $Q^{(k)}$ it is obvious that
 \begin{equation}\label{Qnbound}
 \parallel Q^{(k)} f \parallel_{\infty} \le \parallel f \parallel_{\infty}.
 \end{equation}
By equations (\ref{LYI}), (\ref{composition}) and (\ref{Qnbound}) $P_{\tau_{n}^{(k)}}$ satisfies the following Lasota-Yorke type inequality
 \begin{equation}
     \parallel P_{\tau_{n}}^{(k)}f\parallel_{\infty} \le \left(a_{n}+D_{1}\right) \parallel f \parallel_{\infty} + D \parallel f \parallel_{1},
 \end{equation}
which implies the following uniform estimate
\begin{equation}\label{unif est}
 \parallel f_{n,k} \parallel_{\infty} \le \frac{D}{1-\left(a_{n}+D_{1}\right)}.
\end{equation}
 Now, the proof continues exactly as in \cite{M}.
\end{proof}

\begin{theorem}\label{Thm5} 
Let $\tau \in \mathcal T_{pc}^{\infty,0}(I)$ be a piecewise convex map with countably many branches. As described at the beginning of Subsection 3.1, let $\{\tau_n\}_{n=1}^\infty$ be the approximating sequence of piecewise
convex maps with finite numbers of branches. Let $P_{\tau_n}^{(k)}$ , $k=1,2,\dots$  be the sequence of Ulam's operators approximating the operators $P_{\tau_n}$. Let $f_{n,k}$ be the normalized (in $L^1$) fixed point of 
$P_{\tau_n}^{(k)}$. Then, the family  $\{f_{n,k}\}_{n=1,2,\dots, k=1,2,\dots}$ is weakly compact in $L^1$
and uniformly bounded in $L^\infty$. If $f_{n_j,k_j}$, $j=1,2,\dots $ is a weakly convergent subsequence, then it converges in $L^1$ (and almost everywhere) to a function $f$ which is a fixed point of $P_\tau$, $P_\tau f=f$.
\end{theorem}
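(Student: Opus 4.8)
The plan is to first obtain a uniform $L^\infty$ bound on the whole family, then use the monotonicity of the $f_{n,k}$ to convert weak convergence into $L^1$ (and a.e.) convergence, and finally pass the Ulam fixed‑point equations $f_{n,k}=P_{\tau_n}^{(k)}f_{n,k}$ to the limit, identifying the limit with $f^*$ via the uniqueness in Theorem \ref{ThmSec3}. For the uniform bound: since $a_n\downarrow 0$, choose $n_0\ge N$ with $a_{n_0}+D_1<1$; for every $n\ge n_0$ the estimate \eqref{unif est} from the proof of Theorem \ref{Thm4} gives $\|f_{n,k}\|_\infty\le D/(1-(a_{n_0}+D_1))$ uniformly in $k$, while for each of the finitely many $n<n_0$ the map $\tau_n$ has finitely many convex branches and \cite{M} yields $\sup_k\|f_{n,k}\|_\infty<\infty$. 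Hence $\|f_{n,k}\|_\infty\le M$ for all $n,k$. Since the $f_{n,k}$ are probability densities dominated by the constant $M$, the family is uniformly integrable, hence relatively weakly compact in $L^1$ by Dunford--Pettis; this gives the first assertion.

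Next, given a weakly convergent subsequence $f_{n_j,k_j}\rightharpoonup f$ — the case of interest being $n_j\to\infty$ and $k_j\to\infty$ — I would note that each $f_{n_j,k_j}$ is non-increasing by Lemma \ref{decreasing} and that they are uniformly bounded in $L^\infty$, so Lemma \ref{convL1} upgrades the convergence to $L^1$ and a.e., and $f$ is a non-increasing probability density. To identify $f$, I would first establish the auxiliary fact that $P_{\tau_n}g\to P_\tau g$ in $L^1$ for every bounded non-increasing $g$. Indeed, $\tau_n=\tau$ on $(a_n,1]$ by \eqref{seqapprox}, so for any $\psi\in L^\infty$ the Koopman duality $\int\psi\,P_{\tau_n}g\,d\lambda=\int(\psi\circ\tau_n)\,g\,d\lambda$ gives
\[
\Bigl|\int_0^1 \psi\,(P_{\tau_n}g-P_\tau g)\,d\lambda\Bigr|=\Bigl|\int_{[0,a_n)}(\psi\circ\tau_n-\psi\circ\tau)\,g\,d\lambda\Bigr|\le 2\|\psi\|_\infty\,\|g\|_\infty\,a_n\longrightarrow 0,
\]
so $P_{\tau_n}g\rightharpoonup P_\tau g$ weakly in $L^1$; for bounded non-increasing $g$ each $P_{\tau_n}g$ is again non-increasing and, by Lemma \ref{Lem3}, bounded in $L^\infty$ uniformly in $n$, so Lemma \ref{convL1} promotes this to $L^1$ convergence.

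Then, using $f_{n_j,k_j}=P_{\tau_{n_j}}^{(k_j)}f_{n_j,k_j}=Q^{(k_j)}P_{\tau_{n_j}}f_{n_j,k_j}$ (cf. \eqref{composition}), I would split
\[
\|f_{n_j,k_j}-P_\tau f\|_1\le\bigl\|(Q^{(k_j)}-I)\,P_{\tau_{n_j}} f_{n_j,k_j}\bigr\|_1+\|P_{\tau_{n_j}}(f_{n_j,k_j}-f)\|_1+\|P_{\tau_{n_j}} f - P_\tau f\|_1 .
\]
The first term is at most $k_j^{-1}\|P_{\tau_{n_j}}f_{n_j,k_j}\|_\infty\le k_j^{-1}(M+D)\to 0$, using that $Q^{(k)}$ is the averaging projection and that $P_{\tau_{n_j}}f_{n_j,k_j}$ is non-increasing (Lemma \ref{Lem1}) with $L^\infty$ norm controlled by Lemma \ref{Lem3}; the second is at most $\|f_{n_j,k_j}-f\|_1\to 0$ since $P_{\tau_{n_j}}$ is an $L^1$ contraction; the third tends to $0$ by the auxiliary fact with $g=f$. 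Letting $j\to\infty$ and using $f_{n_j,k_j}\to f$ in $L^1$ yields $P_\tau f=f$. Since $f$ is a non-increasing probability density, Theorem \ref{ThmSec3} forces $f=f^*$, and because every weak limit point of the family is $f^*$, weak compactness together with Lemma \ref{convL1} give $f_{n,k}\to f^*$ in $L^1$ and a.e. as $n,k\to\infty$.

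The step I expect to be the main obstacle is the operator convergence $P_{\tau_n}\to P_\tau$ and its interaction with the simultaneous limit in $n$ and $k$: weak convergence drops out of the duality identity almost for free, but the fixed-point equation can only be passed to the limit in $L^1$, so the monotonicity of all the densities involved and Lemma \ref{convL1} are essential, as is the uniform‑in‑$n$ control of the Ulam truncation error $(Q^{(k_j)}-I)P_{\tau_{n_j}}f_{n_j,k_j}$ provided by the Lasota--Yorke inequality of Lemma \ref{Lem3}.
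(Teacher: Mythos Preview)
Your proof is correct and in the same overall spirit as the paper's, but the execution differs in a useful way. The paper tests against continuous $g$ and splits $\int g(f-P_\tau f)\,d\lambda$ into five terms; the crucial Ulam truncation term $\int g\,(P_{\tau_n}^{(k)}f_{n,k}-P_{\tau_n}f_{n,k})\,d\lambda$ is handled by the self-adjointness of $Q^{(k)}$, moving the projection onto $g$ and using $Q^{(k)}g\to g$. You instead work directly in $L^1$ with a three-term split of $\|f_{n_j,k_j}-P_\tau f\|_1$, and control the truncation term $\|(Q^{(k_j)}-I)P_{\tau_{n_j}}f_{n_j,k_j}\|_1$ by the explicit monotone-variation bound $\le k_j^{-1}\|P_{\tau_{n_j}}f_{n_j,k_j}\|_\infty$; for the operator term you first get weak convergence $P_{\tau_n}g\rightharpoonup P_\tau g$ via Koopman duality (as in the paper's last term) and then upgrade to $L^1$ via Lemma~\ref{convL1}. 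The paper's route is slightly lighter in that it never needs $L^1$ convergence of $P_{\tau_n}g$, only weak pairing with a fixed continuous $g$; your route is more quantitative (it gives an $O(1/k_j)$ rate for the Ulam error) and makes the role of monotonicity more transparent. Your final identification $f=f^*$ via the uniqueness in Theorem~\ref{ThmSec3} is a legitimate strengthening beyond the paper's stated conclusion. One small caveat, shared by both arguments: the conclusion $P_\tau f=f$ requires $n_j\to\infty$ and $k_j\to\infty$ along the subsequence, which you correctly flag.
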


\begin{proof}
The stationary densities $\{f_n\}_{n\ge 1}$ of   $\{\tau_n\}_{n\ge 1}$ are uniformly bounded in $L^\infty$ by Theorem \ref{Thm3}. 
 Moreover, the densities  $f_{n,k}$, the piecewise constant approximations of $f_n$'s  are also uniformly bounded in $L^\infty$  by formula (\ref{unif est}). Thus, the set $\{f_{n,k}\}_{k=1}^\infty$ is weakly compact in $L^1$. Assume that $\{f_{n,k}\}_{k=1}^\infty$  has weakly convergent subsequence $\{f_{n_j, k_j}\}$ with limit $f.$ Since the functions $\{f_{n_j, k_j}\}$ are  decreasing and uniformly bounded, by 
Lemma \ref{convL1} they converge to $f$ a.e. and in $L^1$.

It remains to show that $f$  is a fixed point of $P_\tau$, $P_\tau f=f.$  We will show that the measures $f d\lambda$ and $(P_{\tau}f)d\lambda$ are equal. It is enough to show that for any $g\in C(I)$, we have $\int g(f-P_{\tau}f)d\lambda=0.$ To simplify the notation we assume that the whole sequence $f_{n,k} $ converges to $f$.  We have,
\begin{eqnarray*}
&&\left|{\int g(f-P_{\tau}f)d\lambda}\right|\le\left|{\int g(f-f_{n, k})d\lambda}\right| +\left|{\int g(f_{n, k}-P_{\tau_n}^{(k)}f_{n,k})d\lambda}\right| \nonumber\\ &&+\left|{\int g(P_{\tau_n}^{(k)}f_{n,k}-P_{\tau_n}f_{n,k})d\lambda}\right| +\left|{\int g(P_{\tau_n}f_{n,k}-P_{\tau_n} f)d\lambda}\right|
+\left|{\int g(P_{\tau_n}f-P_\tau f)d\lambda}\right|.
\end{eqnarray*}
Since $f_{n,k}\to f$ in $L^1, $ the first and the fourth term go to $0$ as $n,k\to\infty.$  Since $f_{n,k}$ are the fixed points of $P_{\tau_n}^{(k)},$ the second term is $0.$  We have
\begin{eqnarray*}
{\int g\left[P_{\tau_n}^{(k)}f_{n,k}-P_{\tau_n}f_{n,k}\right]d\lambda}={\int g\left[Q^{(k)}(P_{\tau_n}f_{n,k})-P_{\tau_n}f_{n,k}\right]d\lambda}
=\int\left [Q^{(k)} g - g\right] P_{\tau_n}f_{n,k} d\lambda,
\end{eqnarray*}
by the self adjointness of $Q^{(k)}$.
Since $Q^{(k)}g \to g$ in $L^1$ and the densities $\{f_{n,k}\}$ are uniformly bounded,
$\int g\left[P_{\tau_n}^{(k)}f_{n,k}-P_{\tau_n}f_{n,k}\right]d\lambda$ converges to 0  as $k\to +\infty$.
By the property (vi) of the F-P operator we have
\begin{eqnarray*}
\left|{\int g(P_{\tau_n}f-P_\tau f)d\lambda}\right|=\left|{\int (g\circ \tau_n  -g\circ \tau) f d\lambda}\right|=\left|{\int_{[0,a_n]} (g\circ \tau_n  -g\circ \tau) f d\lambda}\right|
\le a_n \cdot 2M \cdot \|f\|_1 ,
\end{eqnarray*}
where $M= \sup |g|$. 
This shows that$\left|{\int g(P_{\tau_n}f-P_\tau f)d\lambda}\right|$  converges to 0 as $n\to +\infty$.
\end{proof}

\section{Examples}\label{Sec6} 

\begin{example}\label{Ex1}
\noindent Consider the  piecewise expanding and piecewise linear   map $T:[0,1]\rightarrow [0,1]$  with countable number of branches defined as 
\begin{equation}
    T(x)=i(i+1)\left(x-\frac{1}{i+1}\right) \hspace{3 mm}\  \text{ on}\  \hspace{2 mm}\left [\frac{1}{i+1},\frac{1}{i}\right], i=1,2, \cdots.
\end{equation}
It is easy to show that the Lebesgue measure is invariant under $T$  and 
$f = \bold{1}$ is the invariant density of $T$. Now, consider the conjugation $h : [0, 1] \to [0, 1]$ defined by $h(x) = 1-(1-x)^2.$ We construct the
piecewise convex map $\tau : [0, 1] \to [0, 1] $ with countable number of branches defined by  $\tau = h^{-1}\circ T\circ h.$ See Figure 1 for a graph of $\tau.$
\begin{figure}[ht!]
    \centering
    \includegraphics[width=9cm, height =8cm]{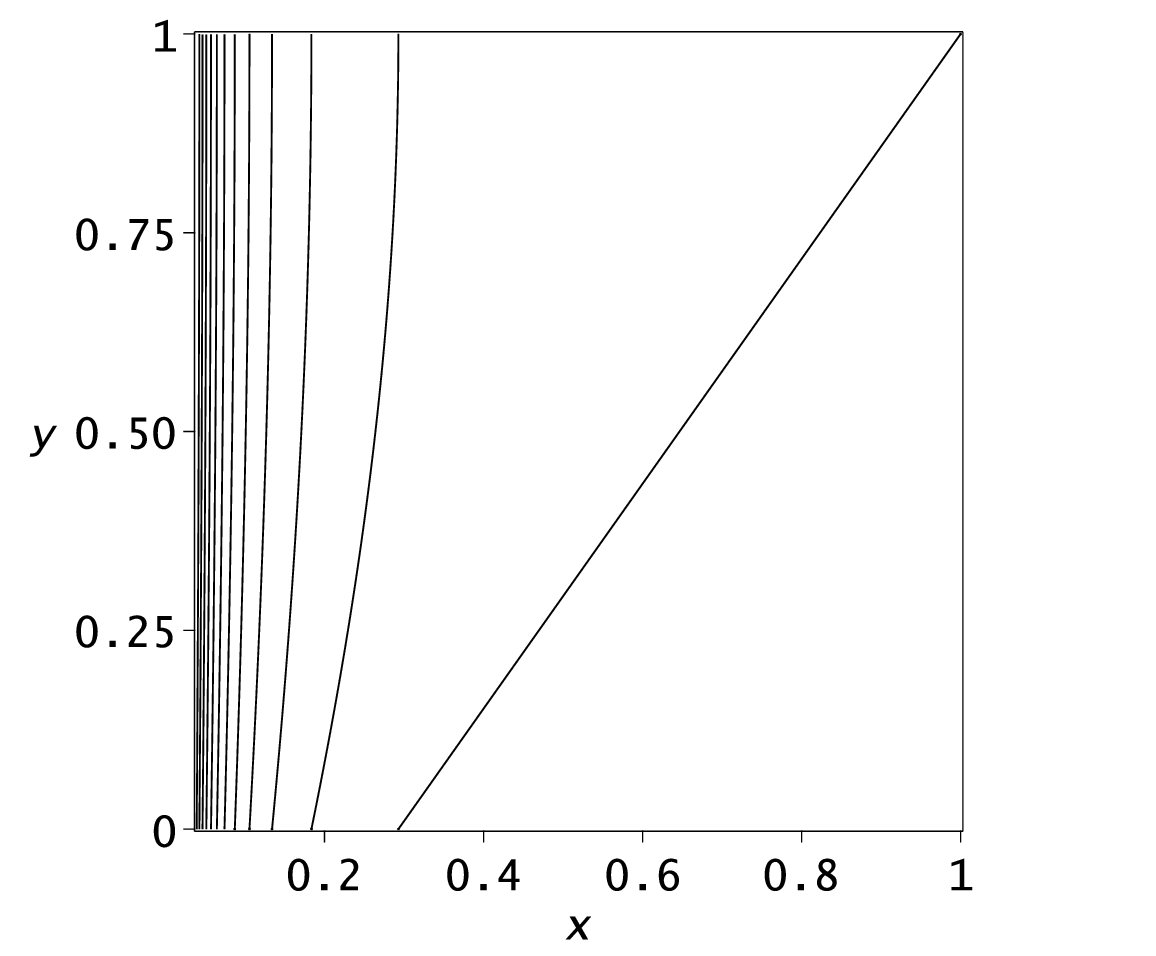}
    \caption{The graph of the piecewise convex map  $\tau$ with countably infinite number of branches for Example \ref{Ex1}.}
\end{figure}
The piecewise convex map $\tau$ is topologically conjugate to the piecewise linear and piecewise expanding map $T$ via the conjugation $h$. Therefore, the stationary  density $g$ of $\tau$  is given by
$g = f \circ  h \times \abs{h^\prime}.$ Now, $h(x) = 1-(1-x)^2.$ Hence, $g(x) = f(h(x)) \times \abs{h^\prime(x)}=\abs{2(1-x)}.$

The map $\tau$ is defined on the partition 
$$\left \{\left [1-\sqrt{\frac{i}{i+1}},1-\sqrt{\frac{i-1}{i}}\right]\right\}_{i=1}^\infty,$$
i.e., the partition points are $$\left\{\dots, 1-\sqrt{\frac{4}{5}}, 1-\sqrt{\frac{3}{4}}, 1-\sqrt{\frac{2}{3}},1-\sqrt{\frac{1}{2}}, 1\right\}.$$

We have $$\tau(x)=1-\sqrt{1-i^2+i(i+1)(1-x)^2}  \  \text{ for }\ x\in \left [1-\sqrt{\frac{i}{i+1}},1-\sqrt{\frac{i-1}{i}}\right]\ ,\ i=1,2,\dots $$
The first few, starting from 1 to the left,   branches of $\tau$ 
are 
\begin{eqnarray*}  \tau_1(x)&=1-\sqrt{2}(1-x) \  & \text{for}\  1-\sqrt{1/2}\le x\le 1 ; \\
 \tau_2(x)&= 1-\sqrt{6(1-x)^2-3} \  & \text{for}\  1-\sqrt{2/3}\le x< 1-\sqrt{1/2} ;\\
 \tau_3(x)&=1-\sqrt{12(1-x)^2-8} \   &\text{for}\ 1-\sqrt{3/4} \le x<1-\sqrt{2/3} ;\\
 \tau_4(x)&=1-\sqrt{20(1-x)^2-15} \  & \text{for}\ 1-\sqrt{4/5} \le x<1-\sqrt{3/4} .\\
\end{eqnarray*}
Now, consider the following   sequence $\{\tau_n\}_{n\ge 0}$   of piecewise convex maps $\tau_n:[0,1]\to [0,1]$  with a finite number of branches:

$$\tau_n(x)=\begin {cases} \frac{1}{1-\sqrt{\frac{n}{n+1}}}x  \ ,         \  0\le x< 1-\sqrt{\frac{n}{n+1}};\\
                           \tau(x) \ ,         \ 0\le x< 1-\sqrt{\frac{n}{n+1}}\le x\le 1.
\end{cases}
$$
See Figure 2 for a graph of $\tau_n$ with $n=5.$  
\begin{figure}[ht!]
    \centering
    \includegraphics[width=9cm, height =8 cm]{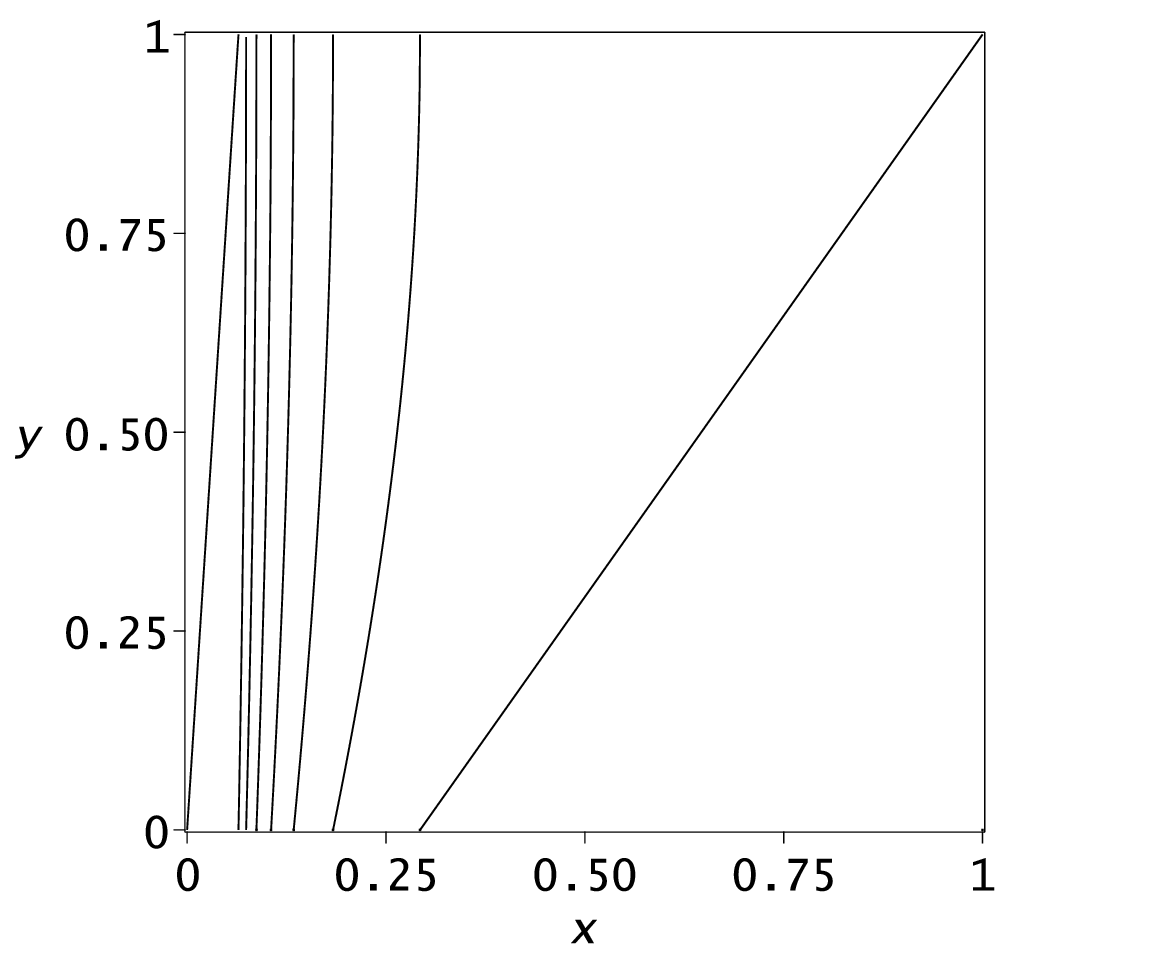}
    \caption{ The graph of the piecewise convex map  $\tau_n$ with a finite  number of branches ($n=8$).}
\end{figure}
The  sequence $\{\tau_n\}_{n\ge 0}$   of piecewise convex map $\tau_n:[0,1]\to [0,1]$  with finite number of branches converges almost uniformly to $\tau$ with the countable number of branches.

In Figure 3, we present  the graph
of the actual density $f^*$ (in red) of the piecewise convex map $\tau$ with countable number of branches and  graphs
of the approximate stationary densities  $ f_{n,k}$ (in blue)  via Ulam's method for  maps $\tau_n$  with  a finite number of branches.  Numerical computations are performed for a number of  cases. 
In the following table, we present  the $L^1$ norm error
$\parallel f^* - f_{n,k}\parallel_1.$ Note that for each $n$, 
$\tau_n$  is a map with a finite number of branches which approximates  the piecewise convex map $\tau$  in Figure 1 with  countable number of branches.
\begin{figure}[ht!]
    \centering
    \includegraphics[width=7cm, height = 8cm]{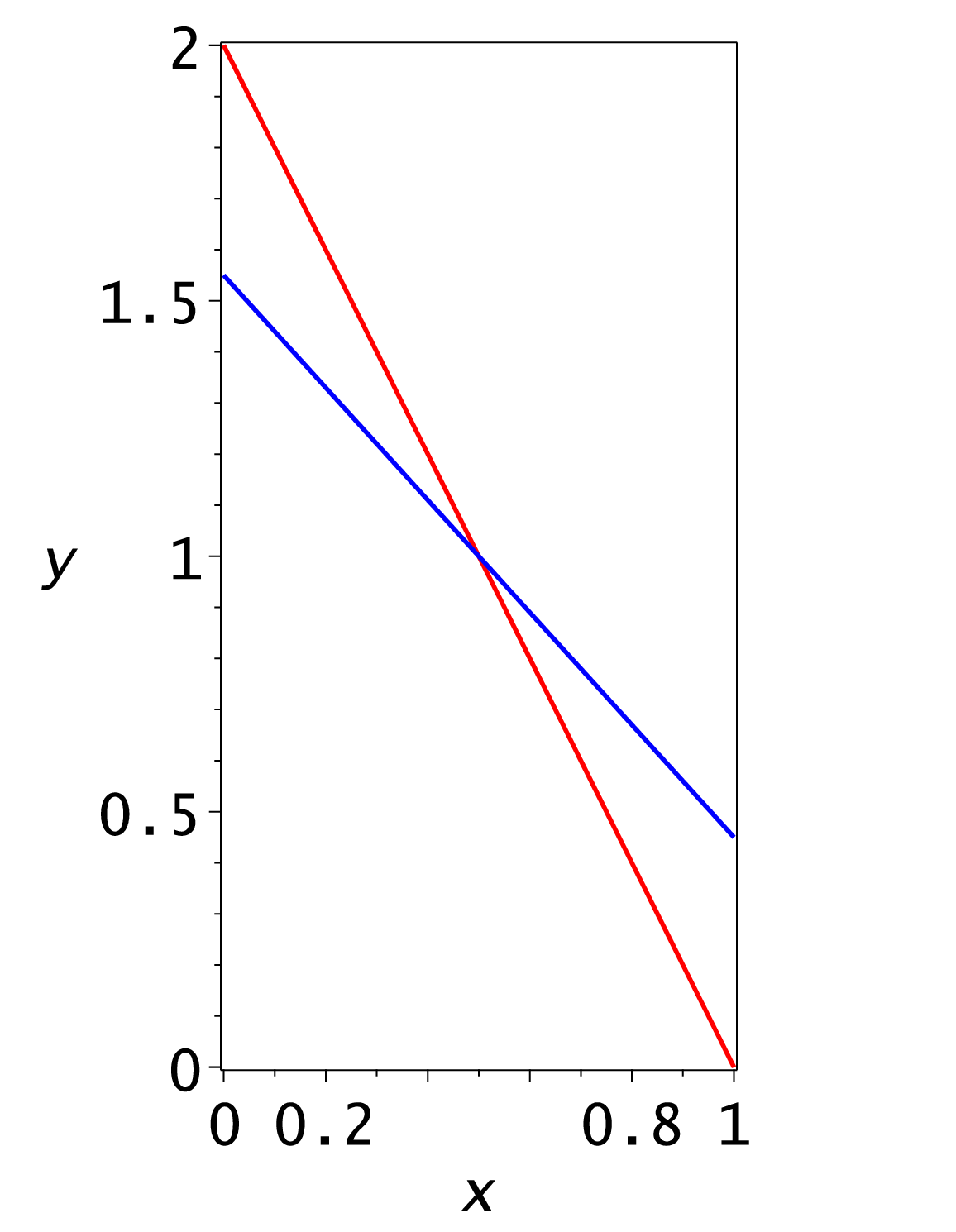}
       \includegraphics[width=7cm, height = 8cm]{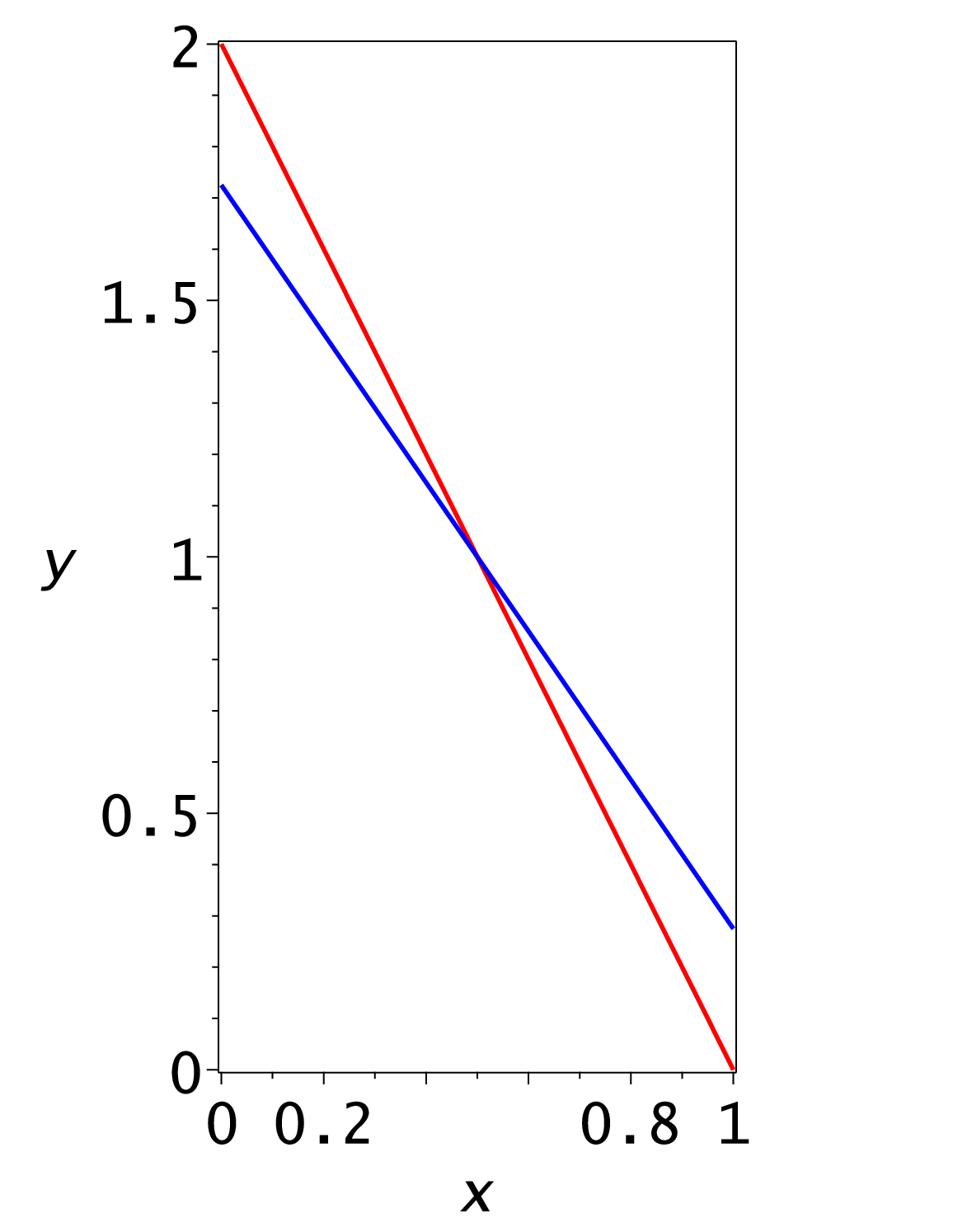}
    \caption{ The graph of the actual invariant density $f^*$ of the piecewise convex map $\tau$  with infinite number of branches (in red) and the graph of the 
approximating density $f_{n,k}$ (in blue): $ n = 5, k = 1000$ on the left and $ n = 10, k = 1000$ on the right hand side.}
\end{figure}

\begin{center}
\begin{tabular}{|c|c|c|}\hline
 $n$& $k$&$\parallel f^* - f_{n,k}\parallel_1$ \\\hline
  $5$ & 1$00$&$0.2195470623$\\\hline
  $5$ & $1000$&  $0.2195243505$\\\hline
$6$& 100&$0.1943541673$\\\hline
 $6$& 1000&$0.1943541673$\\\hline
  $7$& 1000&$0.1742407352$\\\hline
  $10$& 1000&$0.133493040$\\\hline
$12$& 1000&$0.11551923$\\\hline
 \end{tabular}.
\end{center}
The above table shows that as we increase $n$,  the $L^1$ norm error
$\parallel f^* - f_{n,k}\parallel_1$ gets smaller. For the fixed $n$, the increasing of $k$ is not effective. For example, the errors for $k=100$
and $k=1000$  are almost the same. The main method to lower the error is to increase the number of branches of $\tau_n$. 

By Theorem \ref{Thm5}, $L^1$ norm error can be made arbitrarily small by making $n$ and $k$ large enough.  

\end{example}

\begin{example}\label{Ex2}
    
Consider the  piecewise convex map $\tau:[0,1]\rightarrow [0,1]$  with countable number of branches defined as 
\begin{equation}
    \tau(x)=\frac{1}{\frac{2i+1}{i(i+1)}-x}-i \hspace{3 mm} \text{on}  \hspace{2 mm} \left[\frac{1}{i+1},\frac{1}{i}\right],\ i=1,2, \cdots.
\end{equation}
See Figure 4. The map  $\tau$ is defined on  the countable  partition  of $[0, 1]$,  with partition points $a_0=1, a_1=\frac 12, a_2=\frac 13, \cdots, a_n=\frac {1}{n+1}, \cdots.$
\begin{figure}[ht!]
    \centering
    \includegraphics[width=8cm, height = 8cm]{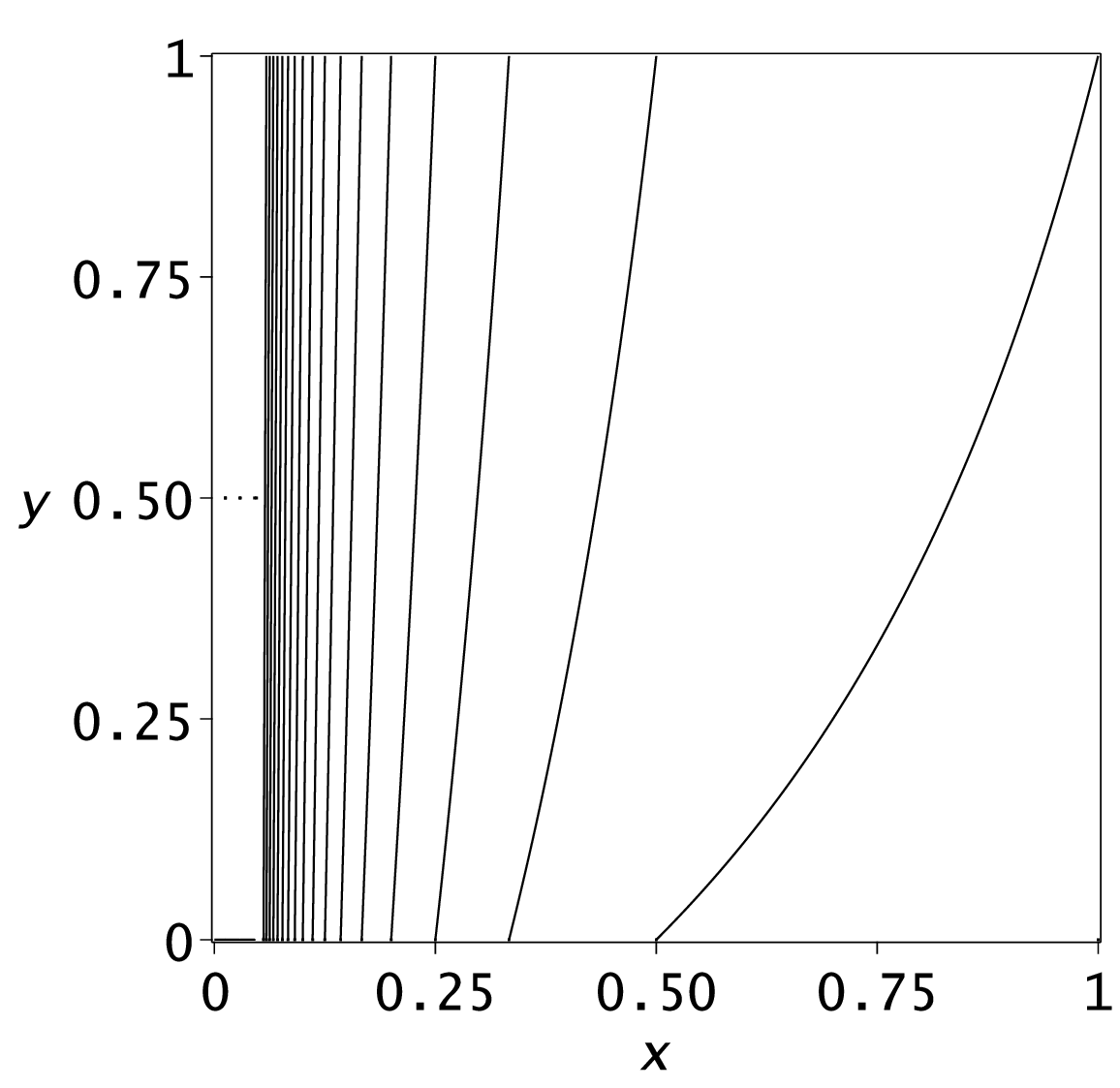}
    \caption{The graph of the piecewise convex map with countable number of branches for Example \ref{Ex2}.}
\end{figure}

It is shown in \cite{GIR} that   $\tau\in \mathcal{T}_{pc}^{\infty,0}(I)$ and hence by Theorem \ref{ThmSec3}, $\tau$ has an ACIM. The actual density of $\tau$ is not known. The  sequence $\{\tau_n\}_{n\ge 0}$   of piecewise convex maps $\tau_n:[0,1]\to [0,1]$  with finite number of branches, see Figure 5   for  graphs of $\tau_n$ with $n=8$ and $n=10$,  converges almost uniformly to $\tau,$ where 

$$\tau_n(x)=\begin {cases} (n+1)x  \ ,         \  0\le x< \frac{1}{n+1};\\
                           \tau(x) \ ,         \  \frac{1}{n+1}\le x\le 1.
\end{cases}
$$

\begin{figure}[ht!]
    \centering
    \includegraphics[width=8cm, height =8cm]{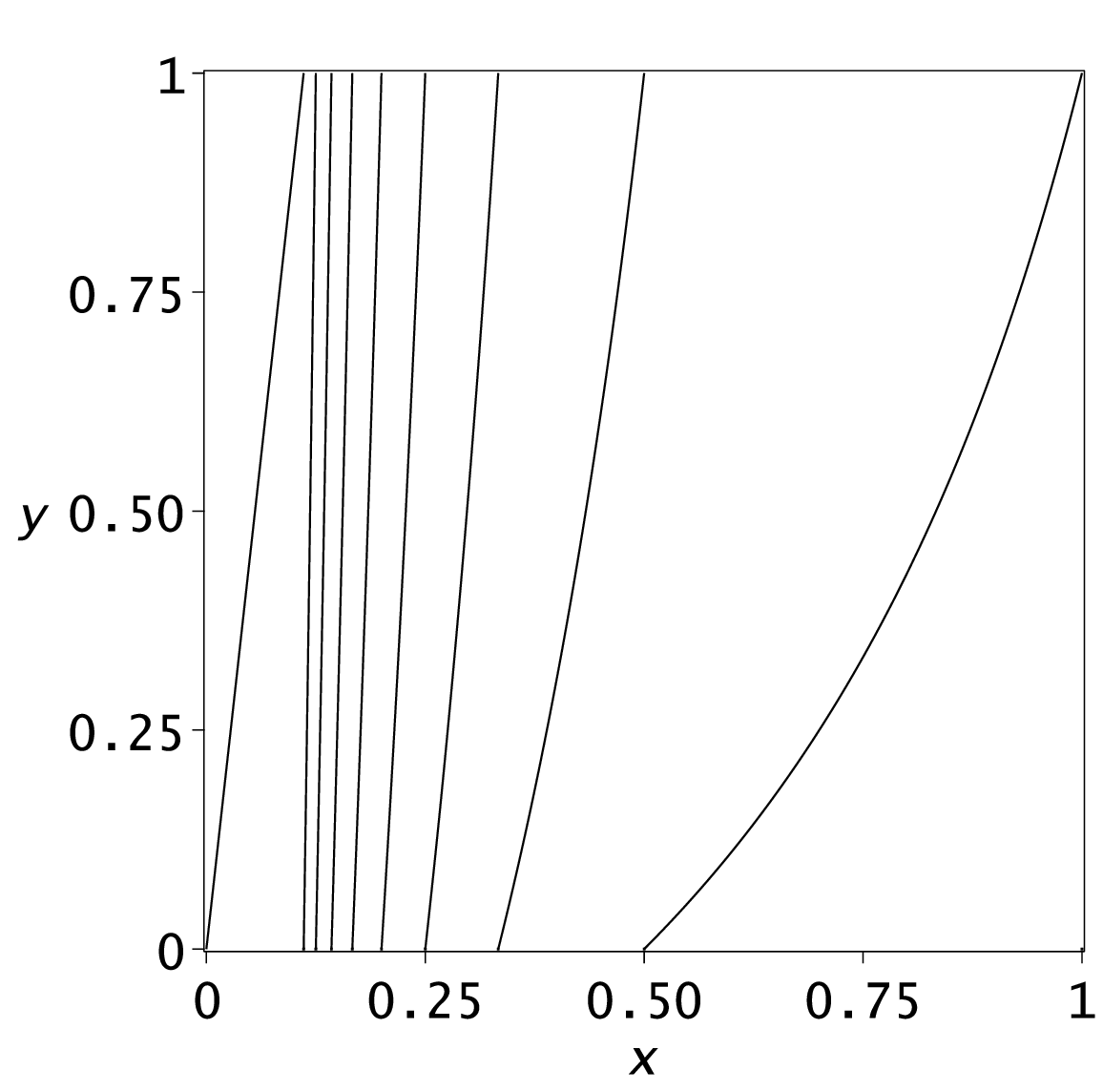}
   \includegraphics[width=8cm, height = 8cm]{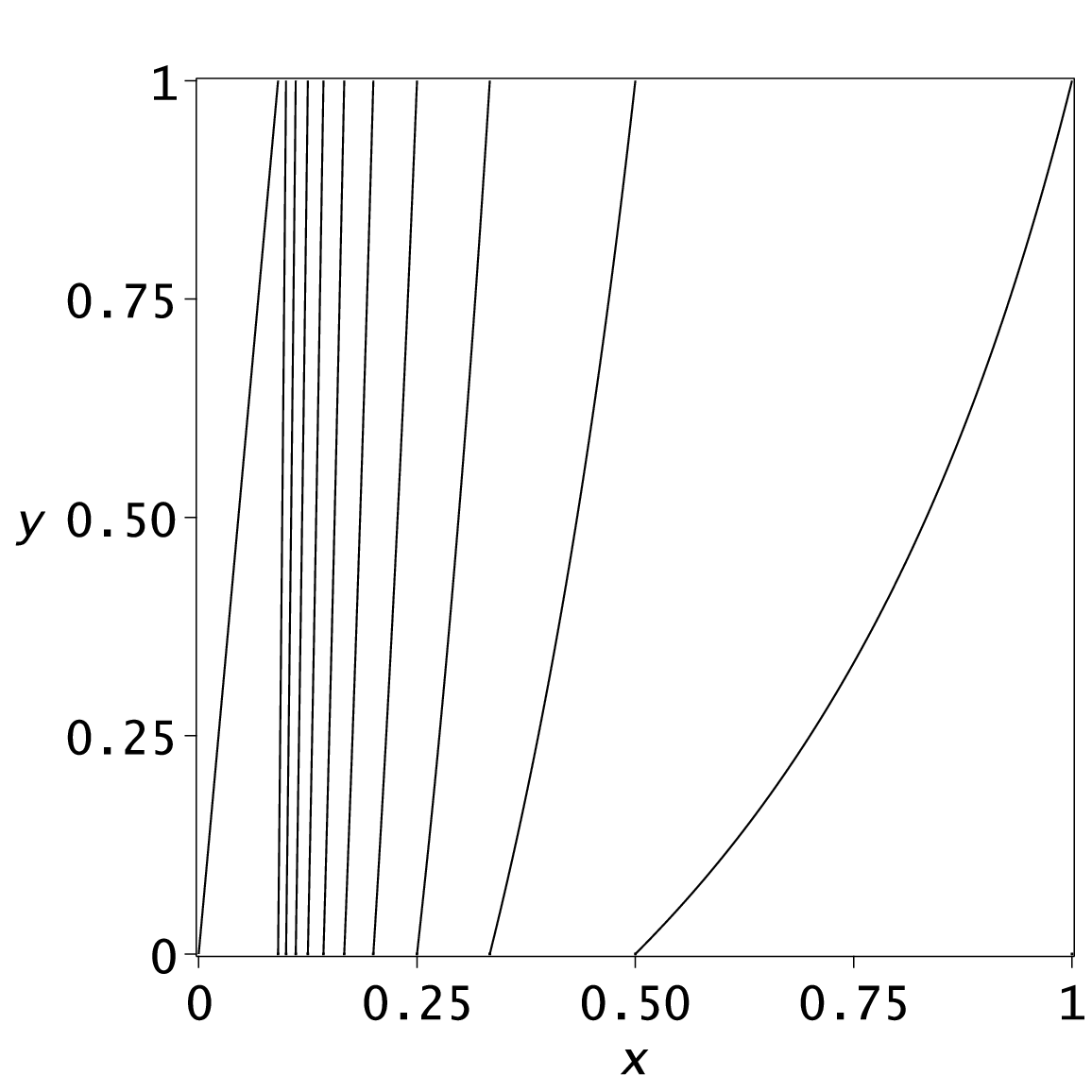}
\caption{The graphs of the piecewise convex maps  $\tau_n$ with finite  number of branches $n=8$ and $n=10$.}
\end{figure}

In Figure 6, we present the graph of the Ulam's approximation  $f_{n,k}, n=10, k=1000$  of the actual  invariant density of $f_n, n=10$, of the piecewise convex map $\tau_n, n=10$, with finite number of branches. By Theorem \ref{Thm5} it is also an approximation of the invariant
density $f^*$ of the map $\tau$.  
The same Figure 6 shows also the approximation $f_{n,k}, n=10, k=500$, as the densities $f_{10,1000}$ and $f_{10,500}$ are indistinguishable at this scale. We have $\|f_{10,1000}-f_{10,500}\|_{1}\sim 0.00055$. In Figure 7 we show the enlargement of both graphs on a small subinterval. The scales on $x$, $y$ axes are different.

\begin{figure}[ht!]
    \centering
    \includegraphics[width=7cm, height = 9cm]{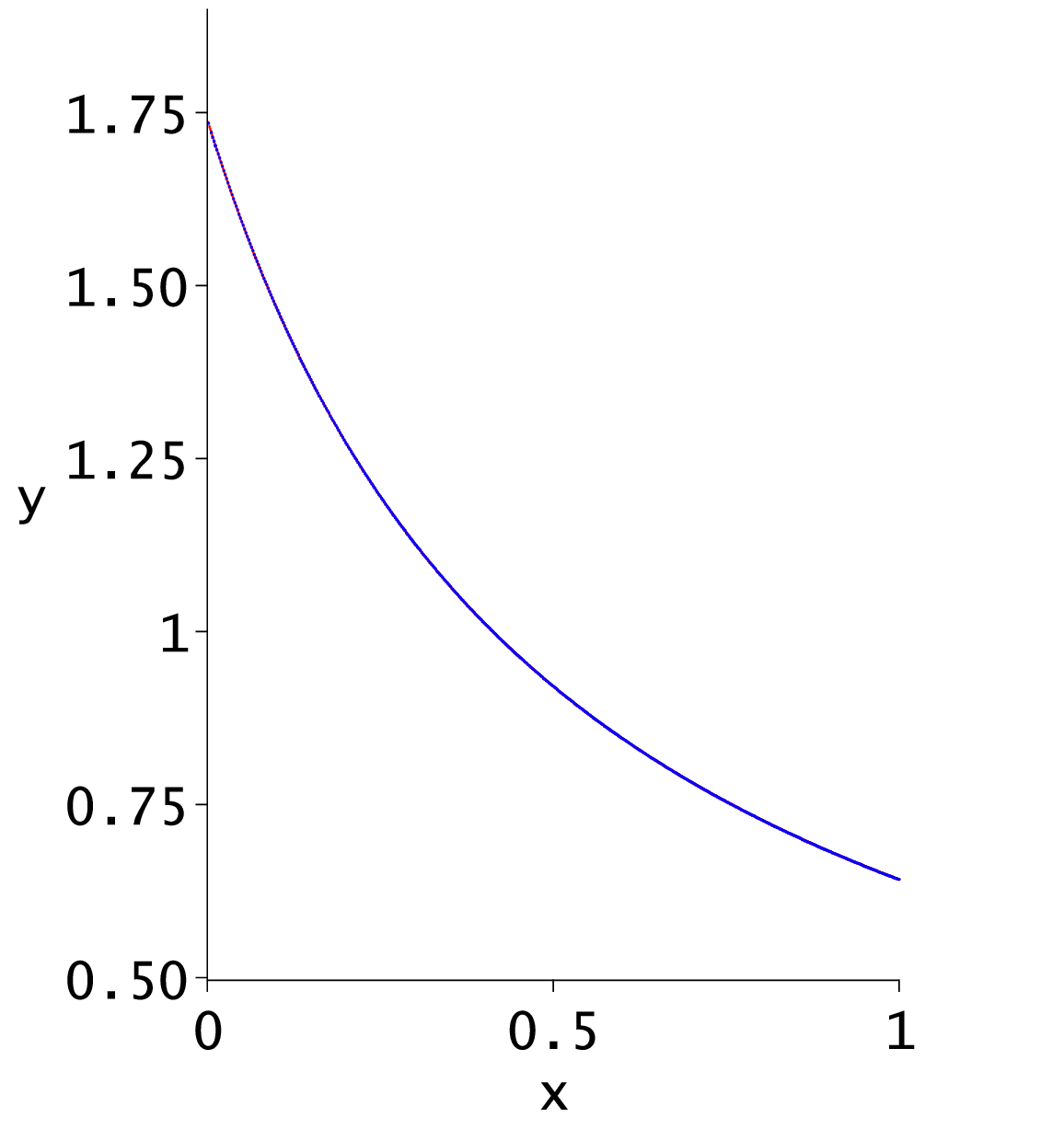}
    \caption{The graph of the Ulam's approximation  $f_{n,k}, n=10, k=1000$.   }
\end{figure}

Similarly as in Example \ref{Ex1}, the effect of increasing $k$ (after some threshold) is negligible. The improvement of the approximation is  achieved by increasing the parameter $n$. The density $f_{11,1000}$ is indistinguishable from $f_{10,1000}$ at the paper illustrations scale
with  $\|f_{10,1000}-f_{11,1000}\|_{1}\sim 0.00035$. The error $\|f_{11,1000}-f_{12,1000}\|_{1}\sim 0.00029.$ 

\begin{figure}[ht!]
    \centering
    \includegraphics[width=9cm, height = 9cm]{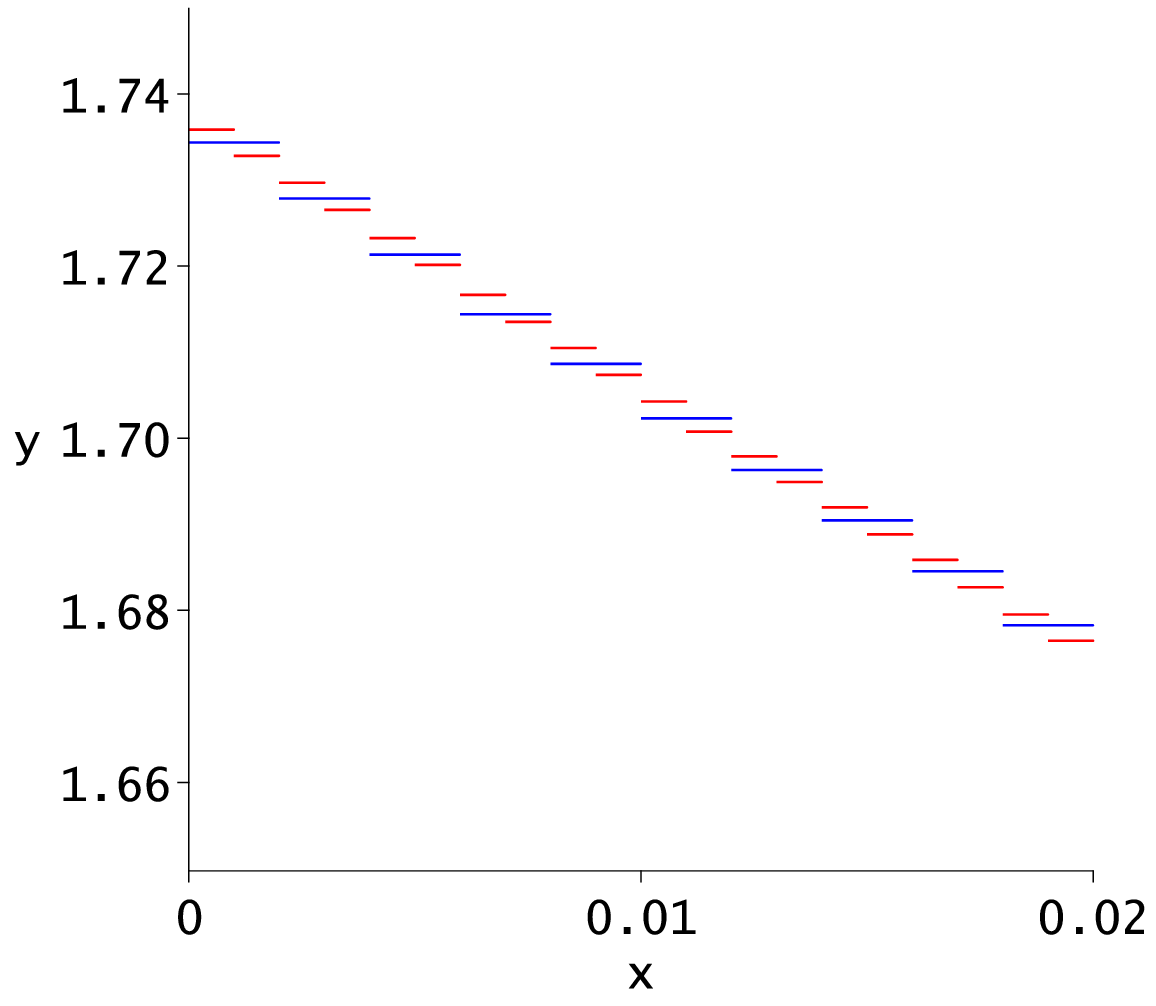}
    \caption{Enlargement of the approximating densities  $f_{10,1000}$ (in red) and $f_{10,500}$ (in blue) on $[0,0.02].$}
\end{figure}
\end{example}

\noindent{\bf Acknowledgment}: The authors are very grateful to  anonymous reviewers for very detailed and useful comments and suggestions and also for  stating a new problem related with their note. The research was supported by the NSERC Discovery Grant (DG) of the first author and second  author. The third author is grateful to Concordia University and University of Prince Edward Island for financial support via NSERC DG grants of the first author and the second author respectively.

\newpage

\newpage

\end{document}